\newtheorem{thm}{Theorem}
\newtheorem{prop}[thm]{Proposition}
\theoremstyle{definition}
\newtheorem*{ackno}{Acknowledgement}
\theoremstyle{remark}
\newtheorem{rmk}{Remark}
\newcommand{\DEF}{{:=}}
\DeclareMathOperator{\dd}{\mathrm{d}}
\DeclareMathOperator{\gammafcn}{\Gamma}
\DeclareMathOperator{\GegenbauerC}{\mathrm{C}}
\DeclareMathOperator{\digammafcn}{\psi}
\DeclareMathOperator{\HyperF}{F}
\newcommand{\HyperpFq}[2]{_{#1}\!\HyperF\!_{#2}}
\newcommand{\Hypergeom}[5]{{\sideset{_#1}{_#2}\HyperF\!\left(
      \genfrac{}{}{0pt}{0}{#3}{#4}
      \middle|\,#5\right)}}
\newcommand{\Pochhsymb}[2]{{\left(#1\right)_{#2}}}
\title{Weighted $L^2$-norms of Gegenbauer
  polynomials} \author[J. S. Brauchart and P. J. Grabner]{Johann S. Brauchart
  and Peter J. Grabner\textsuperscript{\textasteriskcentered{}\textdagger{}}}
\thanks{\noindent
  \textsuperscript{\textasteriskcentered} Corresponding author. \\
  \textsuperscript{\textdagger} The research of this author was supported by
  the Austrian Science Fund FWF project F5503 (part of the Special Research
  Program (SFB) ``Quasi-Monte Carlo Methods: Theory and Applications'').}
\date{\today}
\begin{document}

\address{J. S. Brauchart, P. J. Grabner: 
Institute of Analysis and Number Theory, 
Graz University of Technology, 
Kopernikusgasse 24/II, 
8010 Graz, 
Austria}
\email{j.brauchart@tugraz.at, peter.grabner@tugraz.at}

\begin{abstract}
  We study  integrals of the form
  \begin{equation*}
    \int_{-1}^1(C_n^{(\lambda)}(x))^2(1-x)^\alpha (1+x)^\beta\dd x,
  \end{equation*}
  where $C_n^{(\lambda)}$ denotes the Gegenbauer-polynomial of index
  $\lambda>0$ and $\alpha,\beta>-1$. We give exact formulas for the integrals
  and their generating functions, and obtain asymptotic formulas as
  $n\to\infty$.
\end{abstract}
\date{\today}
\keywords{Gegenbauer polynomials, hypergeometric functions, asymptotic analysis} 
\subjclass[2020]{Primary 33C45; Secundary 33C20 41A60}

\maketitle
\section{Introduction}\label{sec:introduction}
Integrals of the form
\begin{equation}\label{eq:int_p2}
  \int_I p_n^2(x)\,w(x)\dd x,
\end{equation}
where $(p_n)_{n\in\mathbb{N}_0}$ is a sequence of orthogonal polynomials with
respect to some weight $\widetilde{w}$ on the interval $I$ (see
\cite{Szegoe1939:orthogonal_polynomials}), have occurred in different
context. Of course, the case when $w\neq\widetilde{w}$ is the interesting one.

Such integrals for Legendre, associated Legendre and Gegenbauer polynomials
occur in explicit computations of angular momentum in classical as well as
quantum mechanics (see \cite{Edmonds1957:angular_momentum}). Based on this
interest in these computations there exists an extensive literature in a
physics context (see for instance
\cite{Rashid1986:evaluation_integrals_involving,
  Samaddar1974:some_integrals_involving,
  Ullah1984:evaluation_integral_involving,
  Laursen_Mita1981:some_integrals_involving}).

Determinantal point processes (see
\cite{Hough_Krishnapur_Peres+2009:zeros_gaussian_analytic}) have been
introduced also with a strong motivation from physics; they are used to model
Fermionic particles. Since then they have become the object of mathematical
research from various perspectives. One aspect that makes these processes
interesting is their built-in repulsion between different point, which amounts
in better distribution properties of the sample points as compared to
i.~i.~d. points. Also, as a special feature of these processes the computation
of expectations of discrete energy expressions (for a comprehensive introduction and collection of recent results see \cite{Borodachov_Hardin_Saff2019})
\begin{equation*}
  \sum_{i\neq j}f(\|x_i-x_j\|)
\end{equation*}
is computationally feasible. Here $f$ is some potential depending only on the
distance of two points. In many cases these computations lead to integrals of
the form \eqref{eq:int_p2} (see
\cite{Brauchart_Grabner_Kusner+2020:hyperuniform_point_sets,
  Beltran_Ferizovic2020:approximation_to_uniform,
  Beltran_Marzo_Ortega-Cerda2016:determinantal,AlishahiZamani}).

A further probabilistic model that yields to the study of integrals of the form
\eqref{eq:int_p2} has been studied in
\cite{Cammarota_Marinucci2018:quantitative_central_limit}. Here the Gaussian
random field on the sphere $\mathbb{S}^2$ given by
\begin{equation*}
  f_\ell(x)=\sqrt{\frac{4\pi}{2\ell+1}}\sum_{m=-\ell}^\ell a_{\ell m}Y_{\ell m}(x)
\end{equation*}
is studied. Here $(Y_{\ell m})_{\ell=-m}^m$ is an orthonormal base of the space
of spherical harmonics of degree $\ell$ (see
\cite{Mueller1966:spherical_harmonics}) and $(a_{\ell m})_{\ell=-m}^m$ are
independent Gaussian random variables with mean $0$ and variance $1$. Then the
asymptotic study of the distribution of the Euler-Poincar\'e characteristic of
the random field $f_\ell$ involves \emph{inter alia} integrals of the form
\eqref{eq:int_p2}.

We took this as a motivation to provide a general study of such integrals,
where $(p_n)_n$ are Gegenbauer polynomials, and $w(x)$ are Gegenbauer or Jacobi
weights. A special case has been studied in \cite{Ferizovic2020:l_2_norm}.

{\bf Outline of the paper.} In Section~\ref{sec:notation}, we provide notations and collect frequently used facts. In Section~\ref{sec:expl-form-gener}, we define the integral and present explicit formulas in the most general case of Jacobi weights and give the generating function relation. Section~\ref{sec:singularity-analysis} gives a brief introduction into the method of singularity analysis and Section~\ref{sec:mell-barn-form} provides the Mellin-Barnes integral representations of the generating functions for Jacobi and Gegenbauer weights. Section~\ref{sec:jacobi-weights} discusses the generic case for the Jacobi weight. Main results are the asymptotic series relation \eqref{eq:In-asymp} with explicit coefficients and Theorem~\ref{thm:jacobi} concerning the asymptotic leading term. Section~\ref{sec:gegenbauer-weights} discusses the generic case for Gegenbauer weights. Section~\ref{sec:special-cases} provides connection formulas for the integrals and selected non-generic cases.

\section{Preliminaries}\label{sec:notation}
Throughout this paper we use the Gegenbauer polynomials with their standard
normalisation (see
\cite{Magnus_Oberhettinger_Soni1966:formulas_theorems_special}) given by
\begin{equation*}
  \sum_{n=0}^\infty \GegenbauerC_n^{(\lambda)}(x)z^n=\frac1{(1-2xz+z^2)^\lambda}.
\end{equation*}
These polynomials are orthogonal with respect to the weight function
$(1-x^2)^{\lambda-\frac12}$ on the interval $[-1,1]$ and normalised such that (see for
instance \cite{Andrews_Askey_Roy1999:special_functions}) 
\begin{equation} \label{eq:Cnlambda.of.one}
\GegenbauerC_n^{(\lambda)}( 1 ) = \frac{\Pochhsymb{2\lambda}{n}}{n!} = \frac{1}{\gammafcn( 2 \lambda )} \, \frac{\gammafcn( n + 2\lambda )}{\gammafcn( n + 1 )} \sim \frac{1}{\gammafcn( 2 \lambda )} \, n^{2\lambda - 1} \qquad \text{as $n \to \infty$.}
\end{equation}
Furthermore, the relation
\begin{equation} \label{eq:gegenbauerC.orthogonality.relation}
  \int_{-1}^1\left(\GegenbauerC_n^{(\lambda)}(x)\right)^2(1-x^2)^{\lambda-\frac12}\dd x= 
  \frac{\sqrt{\pi} \, \gammafcn( \lambda + \frac{1}{2} )}{\gammafcn( \lambda +1 )} \, \frac{\lambda}{n + \lambda} \, \frac{\Pochhsymb{2\lambda}{n}}{n!}
\end{equation}
holds. We make frequent use of the Pochhammer symbol
\begin{equation*}
  (\alpha)_n=\alpha(\alpha+1)\cdots(\alpha+n-1)=
  \frac{\Gamma(n+\alpha)}{\Gamma(\alpha)}.
\end{equation*}
and the formulas
\begin{equation} \label{eq:pochhammer.rules}
\Pochhsymb{a}{2k} = 2^{2k} \Pochhsymb{\frac{a}{2}}{k} \Pochhsymb{\frac{a+1}{2}}{k}, \quad \Pochhsymb{a}{-k} = \frac{\gammafcn( a - k )}{\gammafcn( a )} = \frac{(-1)^k}{\Pochhsymb{1-a}{k}}, \quad \frac{\Pochhsymb{-n}{k}}{k!} = (-1)^k \binom{n}{k}.
\end{equation}
We also use the digamma function
\begin{equation*}
  \psi(x)=\frac{\Gamma'(x)}{\Gamma(x)}=
  -\gamma+\sum_{n=0}^\infty\left(\frac1{n+1}-\frac1{n+x}\right),
\end{equation*}
where $\gamma$ is the Euler-Mascheroni constant.

The classical hypergeometric functions are given by
\begin{equation*}
  \Hypergeom{p}{q}{a_1,\ldots,a_p}{b_1,\ldots,b_q}{z}=
  \sum_{n=0}^\infty\frac{(a_1)_n\cdots(a_p)_n}{(b_1)_n\cdots(b_q)_nn!}z^n
\end{equation*}
for $a_1,\ldots,a_p,b_1,\ldots,b_q\in\mathbb{C}$ and $p\leq q+1$. These power
series allow for an analytic continuation to the slit complex plane
$\mathbb{C}\setminus[1,\infty)$. For further properties of these functions we
refer to \cite{Andrews_Askey_Roy1999:special_functions,Luke-I:1969}.

We will state some of our results in terms of asymptotic series (see
\cite{Bruijn1958:asymptotic_methods_analysis}). We write
\begin{equation*}
  f(x)\sim\sum_{k=0}^\infty\phi_k(x)\quad\text{as }x\to\infty,
\end{equation*}
if for all $k\geq0$
\begin{equation*}
   \lim_{x\to\infty}\frac{\phi_{k+1}(x)}{\phi_k(x)}=0
\end{equation*}
and
\begin{equation*}
  f(x)-\sum_{\ell=0}^k\phi_\ell(x)=\mathcal{O}(\phi_{k+1}(x)).
\end{equation*}
In the statements of our results we will have sums of two and three asymptotic
series, which we understand in the following way
\begin{align*}
  f(x)&\sim\sum_{k=0}^\infty\phi_k(x)+\sum_{k=0}^\infty\psi_k(x)\\
  &\phantom{=}=  \phi_0(x)+\cdots+\phi_{k_1}(x)+\psi_0(x)+\cdots+\psi_{\ell_1}(x)+
  \phi_{k_1+1}(x)+\cdots\\
  &\phantom{=+}+\phi_{k_2}(x)+\psi_{\ell_1+1}(x)+\cdots,
\end{align*}
where
\begin{equation*}
  0=\lim_{x\to\infty}\frac{\phi_{k_1}(x)}{\psi_0(x)}=
  \lim_{x\to\infty}\frac{\psi_{\ell_1}(x)}{\phi_{k_1+1}(x)}=\cdots;
\end{equation*}
this means that we interlace the terms of the two series to obtain a new
asymptotic series. In the situation where we use this notation the two
constituting series will be so that this notation is well defined.

\section{Explicit formulas and generating functions}
\label{sec:expl-form-gener}

Let $\lambda >0$ and $\alpha, \beta > - 1$. We define for non-negative integers $n$, 
\begin{equation}\label{eq:In-def}
I_n^{(\lambda; \alpha, \beta)} \DEF \int_{-1}^1 \left( \GegenbauerC_n^{(\lambda)}( x ) \right)^2 \left( 1 - x \right)^\alpha \left( 1 + x \right)^\beta \dd x.
\end{equation}

\begin{thm} \label{thm:explicit}
Let $I_n^{(\lambda;\alpha,\beta)}$ be given by \eqref{eq:In-def}. Then we have
\begin{equation*}
\begin{split}
I_n^{(\lambda; \alpha, \beta)} 
&= 2^{\alpha+\beta+1} \frac{\gammafcn( \alpha + 1 ) \gammafcn( \beta + 1 )}{\gammafcn( \alpha + \beta + 2 )} \\
&\phantom{=}\times \left( \frac{\Pochhsymb{2\lambda}{n}}{n!} \right)^2 \Hypergeom{5}{4}{-n, n + 2\lambda, \lambda, \alpha + 1, \beta + 1}{2\lambda, \lambda + \frac{1}{2}, \frac{\alpha+\beta+2}{2}, \frac{\alpha+\beta+3}{2}}{1}.
\end{split}
\end{equation*}
\end{thm}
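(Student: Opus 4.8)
The plan is to collapse $\bigl(\GegenbauerC_n^{(\lambda)}(x)\bigr)^2$ into a single \emph{terminating} ${}_3F_2$ whose argument is $1-x^2$, and then to integrate term by term against the Jacobi weight using the Beta integral. Starting from the standard representation $\GegenbauerC_n^{(\lambda)}(x) = \frac{\Pochhsymb{2\lambda}{n}}{n!}\,{}_2F_1\bigl(-n, n+2\lambda; \lambda+\tfrac12; \tfrac{1-x}{2}\bigr)$, the crucial observation is that the lower parameter satisfies $\lambda+\tfrac12 = \tfrac12\bigl((-n)+(n+2\lambda)+1\bigr)$, so the quadratic transformation ${}_2F_1\bigl(a,b;\tfrac{a+b+1}{2};z\bigr)={}_2F_1\bigl(\tfrac a2,\tfrac b2;\tfrac{a+b+1}{2};4z(1-z)\bigr)$ applies; with $z=\tfrac{1-x}{2}$ one has $4z(1-z)=1-x^2$, whence $\GegenbauerC_n^{(\lambda)}(x) = \frac{\Pochhsymb{2\lambda}{n}}{n!}\,{}_2F_1\bigl(-\tfrac n2, \tfrac n2+\lambda; \lambda+\tfrac12; 1-x^2\bigr)$ on $[-1,1]$. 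The transformed ${}_2F_1$ now meets Clausen's condition — its lower parameter equals the sum of its upper parameters plus $\tfrac12$ — so squaring and invoking Clausen's formula $\bigl[{}_2F_1(a,b;a+b+\tfrac12;z)\bigr]^2={}_3F_2(2a,2b,a+b;2a+2b,a+b+\tfrac12;z)$ (see, e.g., \cite{Andrews_Askey_Roy1999:special_functions}) yields the polynomial identity
\[
  \bigl(\GegenbauerC_n^{(\lambda)}(x)\bigr)^2 = \left(\frac{\Pochhsymb{2\lambda}{n}}{n!}\right)^{2} {}_3F_2\bigl(-n,\, n+2\lambda,\, \lambda;\; 2\lambda,\, \lambda+\tfrac12;\; 1-x^2\bigr),
\]
the series terminating because of the factor $\Pochhsymb{-n}{k}$. (One could also simply quote this closed form for the square, or verify it by matching coefficients.)

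Inserting this into \eqref{eq:In-def} and interchanging the finite sum with the integral, the generic summand carries the integral $\int_{-1}^{1}(1-x^2)^k(1-x)^\alpha(1+x)^\beta\dd x = \int_{-1}^{1}(1-x)^{k+\alpha}(1+x)^{k+\beta}\dd x$, which is the Beta integral $2^{2k+\alpha+\beta+1}\,\gammafcn(k+\alpha+1)\gammafcn(k+\beta+1)/\gammafcn(2k+\alpha+\beta+2)$. Rewriting the Gamma quotient with Pochhammer symbols and splitting $\Pochhsymb{\alpha+\beta+2}{2k}=2^{2k}\Pochhsymb{\frac{\alpha+\beta+2}{2}}{k}\Pochhsymb{\frac{\alpha+\beta+3}{2}}{k}$ by the duplication rule in \eqref{eq:pochhammer.rules}, the powers of $2$ collapse to $2^{\alpha+\beta+1}$, the constant $\gammafcn(\alpha+1)\gammafcn(\beta+1)/\gammafcn(\alpha+\beta+2)$ factors out, and there appear exactly the extra numerator parameters $\alpha+1,\ \beta+1$ and denominator parameters $\tfrac{\alpha+\beta+2}{2},\ \tfrac{\alpha+\beta+3}{2}$. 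Together with the prefactor $\bigl(\Pochhsymb{2\lambda}{n}/n!\bigr)^2$ and the factor $\Pochhsymb{-n}{k}\Pochhsymb{n+2\lambda}{k}\Pochhsymb{\lambda}{k}/\bigl(\Pochhsymb{2\lambda}{k}\Pochhsymb{\lambda+\frac12}{k}\,k!\bigr)$ coming from the ${}_3F_2$, this is precisely the claimed ${}_5F_4$ at $1$.

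The one point demanding care — and the main obstacle to a fully rigorous argument — is the legitimacy of the quadratic transformation and of Clausen's formula when $n$ is odd: then $-\tfrac n2$ is not a nonpositive integer, so the intermediate ${}_2F_1$ and its square are genuinely infinite series in $1-x^2$. This is resolved by observing that in every instance the parameter excess is $c-a-b=\tfrac12>0$, so all series in question converge on the entire closed interval $x\in[-1,1]$ (boundary point $x=0$ included), and hence the chain of identities holds there as a numerical identity; since the final ${}_3F_2$ terminates, the two sides of the displayed square formula are polynomials that agree on $[-1,1]$ and therefore coincide identically, after which the term-by-term integration above is unproblematic.
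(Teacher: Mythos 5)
Your argument is, in substance, the paper's argument: the published proof consists exactly of your second paragraph (the Beta integral $\int_{-1}^1(1-x^2)^k(1-x)^\alpha(1+x)^\beta\dd x$ together with the duplication rule for $\Pochhsymb{\alpha+\beta+2}{2k}$), applied to the linearization identity
\begin{equation*}
\left( \GegenbauerC_n^{(\lambda)}( x ) \right)^2 =
\left( \frac{\Pochhsymb{2\lambda}{n}}{n!} \right)^2
\Hypergeom{3}{2}{-n, n + 2\lambda, \lambda}{2\lambda, \lambda + \frac{1}{2}}{1-x^2},
\end{equation*}
which the paper simply quotes from S\'anchez-Ruiz. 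What you add is a self-contained derivation of that identity from the quadratic transformation and Clausen's formula; that is a legitimate and indeed standard route, and your bookkeeping in the integration step (powers of $2$, the extra parameters $\alpha+1,\beta+1$ upstairs and $\frac{\alpha+\beta+2}{2},\frac{\alpha+\beta+3}{2}$ downstairs) is correct.

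One intermediate assertion is false as stated and your ``point demanding care'' paragraph defends the wrong thing. For odd $n$ the identity $\GegenbauerC_n^{(\lambda)}(x)=\frac{\Pochhsymb{2\lambda}{n}}{n!}\,\HyperpFq21\!\left(-\tfrac n2,\tfrac n2+\lambda;\lambda+\tfrac12;1-x^2\right)$ cannot hold on all of $[-1,1]$: the right-hand side is an even function of $x$, the left-hand side is odd (for $n=1$, $\lambda=\tfrac12$ the right-hand side is $|x|$, not $x$). The obstruction is not convergence --- your parameter-excess observation $c-a-b=\tfrac12>0$ is fine --- but the fact that $z\mapsto 4z(1-z)$ is two-to-one on $[0,1]$, so the quadratic transformation is valid only for $z=\tfrac{1-x}{2}\le\tfrac12$, i.e.\ for $x\in[0,1]$. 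The repair is the one you almost give yourself: run the chain of identities only for $x\in[0,1]$; since the final displayed identity equates two polynomials in $x$ (the $\HyperpFq32$ terminates), agreement on $[0,1]$ already forces agreement everywhere, and the term-by-term integration over $[-1,1]$ then proceeds as you describe. With that correction the proof is complete.
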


\begin{rmk} \label{rmk:explicit} For $\beta = \alpha=\mu-\frac12$, the
  $1$-balanced $\HyperpFq54$-hypergeometric polynomial reduces to a $1$-balanced
  $\HyperpFq43$-hypergeometric polynomial
\begin{equation*}
\Hypergeom{4}{3}{-n, n + 2\lambda, \lambda, \mu+\frac12}{2\lambda, \lambda + \frac{1}{2}, \mu +1}{1}.
\end{equation*}
To simplify notation, we set
\begin{equation*}
  J_n^{(\lambda;\mu)} \DEF I_n^{(\lambda;\mu-\frac12,\mu-\frac12)}, \qquad \lambda > 0, \mu > - \frac{1}{2}.
\end{equation*}

For $\mu = \lambda$, the $\HyperpFq43$ becomes
\begin{equation*}
\Hypergeom{3}{2}{-n, n + 2\lambda, \lambda}{2\lambda, \lambda + 1}{1},
\end{equation*}
hence can be computed by the Pfaff-Saalsch\"utz theorem as
\begin{equation*}
  \frac{\Pochhsymb{\lambda}{n} \Pochhsymb{-n}{n}}
  {\Pochhsymb{2\lambda}{n} \Pochhsymb{-n-\lambda}{n}}
  =\frac\lambda{n+\lambda}\frac{n!}{\Pochhsymb{2\lambda}{n}},
\end{equation*}
which, of course, reproduces the well known formula
\eqref{eq:gegenbauerC.orthogonality.relation} for the $L^2$-norm of
$\GegenbauerC_n^{(\lambda)}$ for weight $(1-x^2)^{\lambda-\frac12}$ (see
\cite{Andrews_Askey_Roy1999:special_functions}). 
\end{rmk}

\begin{proof}[Proof of Theorem~\ref{thm:explicit}]

The result follows from \cite[Eq.~(16)]{SanchezRuiz2001:linearization_and_connection_formula}, i.e.
\begin{equation*}
  \left( \GegenbauerC_n^{(\lambda)}( x ) \right)^2 =
  \left( \frac{\Pochhsymb{2\lambda}{n}}{n!} \right)^2
  \Hypergeom{3}{2}{-n, n + 2\lambda, \lambda}
  {2\lambda, \lambda + \frac{1}{2}}{1-x^2},
\end{equation*}
the relation 
\begin{equation*}
  \int_{-1}^1 (1-x^2)^k\left( 1 - x \right)^{\alpha}
  \left( 1 + x \right)^{\beta}\dd x
= 2^{2k + \alpha + \beta + 1} \frac{\gammafcn( k + \alpha + 1 )
  \gammafcn( k + \beta + 1 )}{\gammafcn( 2k + \alpha + \beta + 2 )},
\end{equation*}
and the duplication formula in \eqref{eq:pochhammer.rules} in order to rewrite the Pochhammer symbol $\Pochhsymb{\alpha+\beta+2}{2k}$.
\end{proof}

\begin{thm} \label{thm:generating.function.relation}
  The integrals $I_n^{(\lambda; \alpha, \beta)}$ satisfy the following generating
function relation
\begin{equation*}
\begin{split}
  \mathcal{I}^{(\lambda;\alpha,\beta)}(z)&\DEF\sum_{n=0}^\infty \frac{n!}{\Pochhsymb{2\lambda}{n}}  \,
  I_n^{(\lambda; \alpha, \beta)} \, z^n\\
&= 2^{\alpha+\beta+1} \frac{\gammafcn( \alpha + 1 ) \gammafcn( \beta + 1 )}{\gammafcn( \alpha + \beta + 2 )}
\frac{1}{( 1 - z )^{2\lambda}}
\Hypergeom{4}{3}{\lambda, \lambda, \alpha + 1, \beta + 1}
{2\lambda, \frac{\alpha+\beta+2}{2},
  \frac{\alpha+\beta+3}{2}}{-\frac{4z}{(1-z)^2}}.
\end{split}
\end{equation*}
\end{thm}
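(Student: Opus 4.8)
The plan is to substitute the explicit $\HyperpFq54$-evaluation of $I_n^{(\lambda;\alpha,\beta)}$ from Theorem~\ref{thm:explicit} into the defining series and reorganise the resulting double sum. Set $C\DEF 2^{\alpha+\beta+1}\gammafcn(\alpha+1)\gammafcn(\beta+1)/\gammafcn(\alpha+\beta+2)$. Expanding the terminating $\HyperpFq54$ at $1$ as a finite sum over an index $k$, Theorem~\ref{thm:explicit} gives
\begin{equation*}
  \mathcal{I}^{(\lambda;\alpha,\beta)}(z)
  = C\sum_{n=0}^\infty\frac{\Pochhsymb{2\lambda}{n}}{n!}\,z^n
  \sum_{k=0}^n\frac{\Pochhsymb{-n}{k}\Pochhsymb{n+2\lambda}{k}\Pochhsymb{\lambda}{k}\Pochhsymb{\alpha+1}{k}\Pochhsymb{\beta+1}{k}}
  {\Pochhsymb{2\lambda}{k}\Pochhsymb{\lambda+\frac12}{k}\Pochhsymb{\frac{\alpha+\beta+2}{2}}{k}\Pochhsymb{\frac{\alpha+\beta+3}{2}}{k}\,k!}.
\end{equation*}
I would run the whole argument at the level of formal power series in $z$, where every coefficient of $z^n$ is a finite sum, so that interchanging summations and rearranging double sums is unconditionally legitimate; the passage back to analytic functions is then handled at the end. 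For the latter: the defining series converges for $\abs z<1$, since $\abs{\GegenbauerC_n^{(\lambda)}(x)}\le\GegenbauerC_n^{(\lambda)}(1)$ on $[-1,1]$ together with \eqref{eq:Cnlambda.of.one} forces $\frac{n!}{\Pochhsymb{2\lambda}{n}}I_n^{(\lambda;\alpha,\beta)}=\mathcal{O}(n^{2\lambda-1})$, and the right-hand side of the claimed identity is analytic near $z=0$ because $z\mapsto-4z/(1-z)^2$ vanishes there; in fact it is analytic on the whole disc $\abs z<1$, as that map carries the unit disc into $\Cset\setminus[1,\infty)$, so equality of the two Taylor series about $0$ finishes the proof.

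For the core computation I would interchange the two sums, fix $k\ge0$, and for $n\ge k$ use the elementary identities $\Pochhsymb{2\lambda}{n}\Pochhsymb{n+2\lambda}{k}=\Pochhsymb{2\lambda}{n+k}$ and $\Pochhsymb{-n}{k}/n!=(-1)^k/(n-k)!$ --- the latter being the third relation in \eqref{eq:pochhammer.rules} --- to reduce the $n$-summand to $(-1)^k\Pochhsymb{2\lambda}{n+k}/(n-k)!$. Substituting $n=k+m$ and splitting $\Pochhsymb{2\lambda}{m+2k}=\Pochhsymb{2\lambda}{2k}\Pochhsymb{2\lambda+2k}{m}$, the sum over $m\ge0$ becomes the binomial series $\sum_{m\ge0}\Pochhsymb{2\lambda+2k}{m}z^m/m!=(1-z)^{-2\lambda-2k}$, leaving
\begin{equation*}
  \mathcal{I}^{(\lambda;\alpha,\beta)}(z)
  = C\,(1-z)^{-2\lambda}\sum_{k=0}^\infty
  \frac{\Pochhsymb{2\lambda}{2k}\,\Pochhsymb{\lambda}{k}\Pochhsymb{\alpha+1}{k}\Pochhsymb{\beta+1}{k}}
  {\Pochhsymb{2\lambda}{k}\Pochhsymb{\lambda+\frac12}{k}\Pochhsymb{\frac{\alpha+\beta+2}{2}}{k}\Pochhsymb{\frac{\alpha+\beta+3}{2}}{k}\,k!}
  \left(-\frac{z}{(1-z)^2}\right)^k.
\end{equation*}

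To finish, apply the duplication formula $\Pochhsymb{2\lambda}{2k}=2^{2k}\Pochhsymb{\lambda}{k}\Pochhsymb{\lambda+\frac12}{k}$ from \eqref{eq:pochhammer.rules}: the factor $\Pochhsymb{2\lambda}{2k}$ cancels the $\Pochhsymb{\lambda+\frac12}{k}$ in the denominator, contributes a second $\Pochhsymb{\lambda}{k}$ to the numerator, and supplies the $2^{2k}$ that turns $-z/(1-z)^2$ into $-4z/(1-z)^2$; the remaining $k$-sum is then exactly
\begin{equation*}
  \Hypergeom{4}{3}{\lambda,\lambda,\alpha+1,\beta+1}{2\lambda,\frac{\alpha+\beta+2}{2},\frac{\alpha+\beta+3}{2}}{-\frac{4z}{(1-z)^2}},
\end{equation*}
which is the assertion. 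The argument is routine Pochhammer-symbol bookkeeping, and the only point needing the slightest care --- minor at that --- is the analytic housekeeping described in the first paragraph; the one substantive move is recognising that the duplication formula is precisely what is needed to absorb the quadratic Pochhammer factor $\Pochhsymb{2\lambda}{2k}$ into the parameters of a $\HyperpFq43$. As a sanity check, at $z=0$ the identity reduces to $I_0^{(\lambda;\alpha,\beta)}=2^{\alpha+\beta+1}\gammafcn(\alpha+1)\gammafcn(\beta+1)/\gammafcn(\alpha+\beta+2)$, in agreement with the Beta-integral used in the proof of Theorem~\ref{thm:explicit}.
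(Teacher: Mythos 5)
Your proposal is correct and follows essentially the same route as the paper: substitute the terminating $\HyperpFq54$ from Theorem~\ref{thm:explicit}, interchange the two summations, and evaluate the inner sum over $n$ via the binomial series together with the duplication formula $\Pochhsymb{2\lambda}{2k}=4^k\Pochhsymb{\lambda}{k}\Pochhsymb{\lambda+\frac12}{k}$. The only cosmetic difference is that you derive the inner-sum identity by hand (and add the convergence housekeeping) where the paper invokes \texttt{Mathematica} for that step.
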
 

\begin{rmk} \label{rmk:generating.function.relation}
  For $\alpha = \beta = \mu-\frac12$, we get with the help of
  \texttt{Mathematica}~12,
\begin{equation*}
  \mathcal{J}^{(\lambda;\mu)}(z)\DEF
  \sum_{n=0}^\infty \frac{n!}{\Pochhsymb{2\lambda}{n}} \,
  J_n^{(\lambda; \mu)} \, z^n
  = \frac{\sqrt{\pi}\gammafcn( \mu + \frac12 )}
  { \, \gammafcn( \mu + 1 )} \,
  \frac{1}{( 1 - z )^{2\lambda}}
  \Hypergeom{3}{2}{\lambda, \lambda, \mu + \frac12}
  {2\lambda, \mu + 1}
  {-\frac{4z}{(1-z)^2}}.
\end{equation*}
The same right-hand side is obtained for $\alpha = \mu + \frac12$ and
$\beta = \mu-\frac12$. This is obvious by the fact that
\begin{equation*}
  \int_{-1}^1\left(\GegenbauerC_n^{(\lambda)}(x)\right)^2(1-x)(1-x^2)^{\mu-\frac12}\dd x=
  \int_{-1}^1\left(\GegenbauerC_n^{(\lambda)}(x)\right)^2(1-x^2)^{\mu-\frac12}\dd x.
\end{equation*}
\end{rmk}

\begin{proof}[Proof of Theorem~\ref{thm:generating.function.relation}]
By Theorem~\ref{thm:explicit},
\begin{equation*}
\begin{split}
  &\sum_{n=0}^\infty \frac{n!}{\Pochhsymb{2\lambda}{n}}  \,
  I_n^{(\lambda; \alpha, \beta)} \, z^n 
  = \underbrace{2^{\alpha+\beta+1}
    \frac{\gammafcn( \alpha + 1 ) \gammafcn( \beta + 1 )}
    {\gammafcn( \alpha + \beta + 2 )}}_{A} \\
  &\phantom{equals}\times \sum_{n=0}^\infty \sum_{\ell=0}^n
  \frac{\Pochhsymb{2\lambda}{n+\ell}}{(n-\ell)!} \,
  \underbrace{\frac{\Pochhsymb{\lambda}{\ell} \Pochhsymb{\alpha+1}{\ell}
      \Pochhsymb{\beta+1}{\ell}}{\Pochhsymb{2\lambda}{\ell}
      \Pochhsymb{\lambda + \frac{1}{2}}{\ell}
      \Pochhsymb{\frac{\alpha+\beta+2}{2}}{\ell}
      \Pochhsymb{\frac{\alpha+\beta+3}{2}}{\ell}} \,
    \frac{(-1)^\ell}{\ell!}}_{c_\ell} \, z^n.
\end{split}
\end{equation*}
Interchanging order of summation,
\begin{equation*}
  \sum_{n=0}^\infty \frac{n!}{\Pochhsymb{2\lambda}{n}}  \,
  I_n^{(\lambda; \alpha, \beta)} \, z^n =
  A \sum_{\ell=0}^\infty \sum_{n=\ell}^\infty c_\ell \,
  \frac{\Pochhsymb{2\lambda}{n+\ell}}{(n-\ell)!} \,  z^n,
\end{equation*}
and taking into account that with the help of \texttt{Mathematica}~12,
\begin{equation*}
  \sum_{n=\ell}^\infty \frac{\Pochhsymb{2\lambda}{n+\ell}}{(n-\ell)!} \, z^n
  = \frac{1}{( 1 - z )^{2\lambda}} \, \Pochhsymb{\lambda}{\ell}
  \Pochhsymb{\lambda+\frac{1}{2}}{\ell} \left( \frac{4z}{(1-z)^2} \right)^\ell,
\end{equation*}
we arrive at the series expansion of the desired hypergeometric function.
\end{proof}

\section{Singularity analysis}\label{sec:singularity-analysis}
In the last section we have found generating functions for the quantities
$I_n^{(\lambda;\alpha,\beta)}$ and $J_n^{(\lambda;\mu)}$. In order to retrieve
asymptotic information about these quantities from analytic information about
the generating function at its singularity, we briefly discuss the method of
\emph{singularity analysis} introduced in
\cite{Flajolet_Odlyzko1990:singularity_analysis_generating}. The main advantage
of this method over the classical method of Darboux (see
\cite{Darboux1878:memoire_sur_lapproximation_1,
  Darboux1878:memoire_sur_lapproximation_2}) is that this method is also able
to obtain asymptotic expressions for the coefficents of generating functions in
the case that the coefficients tend to $0$. This difference comes from the fact
that Darboux's method uses a local approximation of the generating function
\emph{inside} the circle of convergence and uses the Riemann-Lebesgue-lemma to
obtain an error term. Singularity analysis needs information on the behaviour
of the analytic continuation to a region of the form
\begin{equation*}
  \Delta_{\varepsilon,\phi}=
  \left\{z\in\mathbb{C} \  \big| \  |z|<1+\varepsilon, |\arg(1-z)|<\phi \right\}
\end{equation*}
for some $\pi>\phi>\frac\pi2$ (assuming that the radius of convergence is $1$).
Since in our case the generating functions have an analytic continuation
the complex plane with a branch cut connecting $1$ and $\infty$, the method is
readily applicable.

The main ingredient of the method is the following theorem.
\begin{thm}[Big-$\mathcal{O}$-theorem, see
  {\cite[Theorem~1]{Flajolet_Odlyzko1990:singularity_analysis_generating}}]
  \label{thm:big-o}
  Assume that, with the sole exception of the singularity $z=1$, $f(z)$ is
  analytic in $\Delta_{\varepsilon,\phi}$ for some $\varepsilon>0$ and
  $\phi>\frac\pi2$. Assume further that as $z$ tends to $1$ in
  $\Delta_{\varepsilon,\phi}$,
  \begin{equation*}
    f(z)=\mathcal{O}(|1-z|^\alpha)
  \end{equation*}
  for some real number $\alpha$. Then the $n$-th Taylor coefficient of $f(z)$
  satisfies
  \begin{equation*}
    f_n=[z^n]f(z)=\mathcal{O}(n^{-\alpha-1}).
  \end{equation*}
\end{thm}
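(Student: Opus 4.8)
The plan is the contour-shift argument at the heart of singularity analysis: write $f_n$ as a Cauchy integral and push the contour of integration onto a curve that skirts the singularity at $z=1$ at distance of order $1/n$. Since $\Delta_{\varepsilon,\phi}$ contains a neighbourhood of the origin, $f$ has a Taylor expansion there and
\[
  f_n=[z^n]f(z)=\frac{1}{2\pi\ii}\oint_{|z|=r}\frac{f(z)}{z^{n+1}}\,\dd z
\]
for every small $r>0$. The integrand $f(z)\,z^{-n-1}$ is analytic on $\Delta_{\varepsilon,\phi}\setminus\{0,1\}$, so the value of the integral is unchanged if $|z|=r$ is replaced by any positively oriented, piecewise-smooth simple closed curve $\Gamma\subset\Delta_{\varepsilon,\phi}\setminus\{0,1\}$ homologous to $|z|=r$ there, i.e.\ having winding number $1$ about $0$ and $0$ about $1$.

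First I would fix an angle $\theta$ with $\pi-\phi<\theta<\tfrac{\pi}{2}$ — the interval is nonempty precisely because $\phi>\tfrac{\pi}{2}$ — together with a radius $R\in(1,1+\varepsilon)$, and take $\Gamma=\Gamma_{\mathrm{in}}\cup\Gamma_{+}\cup\Gamma_{\mathrm{out}}\cup\Gamma_{-}$, where $\Gamma_{\mathrm{in}}=\{\,1+\tfrac1n e^{\ii\psi}:\theta\le\psi\le2\pi-\theta\,\}$ is a circular arc of radius $1/n$ about $z=1$; $\Gamma_{\pm}=\{\,1+t\,e^{\pm\ii\theta}:\tfrac1n\le t\le t_R\,\}$ are two rectilinear segments, $t_R$ being the value of $t$ at which the segment reaches $|z|=R$; and $\Gamma_{\mathrm{out}}$ is the arc of $|z|=R$ joining the outer endpoints of $\Gamma_{\pm}$ on the side away from the cut $[1,\infty)$. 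The admissibility $\Gamma\subset\Delta_{\varepsilon,\phi}\setminus\{0,1\}$ is routine: along $\Gamma_{\pm}$ one has $1-z=t\,e^{\ii(\theta+\pi)}$, hence $|\arg(1-z)|=\pi-\theta<\phi$, while $|z|$ increases monotonically in $t$ (because $\cos\theta>0$) and stays below $R<1+\varepsilon$; the small arc $\Gamma_{\mathrm{in}}$ likewise has $|\arg(1-z)|\le\pi-\theta<\phi$ and $|z|\le1+\tfrac1n<1+\varepsilon$ for $n$ large; and $\Gamma_{\mathrm{out}}$ avoids a neighbourhood of the cut by construction. A homology argument then gives $f_n=\tfrac1{2\pi\ii}\oint_{\Gamma}f(z)z^{-n-1}\,\dd z$.

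Next I would estimate the three kinds of pieces. On $\Gamma_{\mathrm{in}}$: $|1-z|=\tfrac1n$ gives $f(z)=\mathcal{O}(n^{-\alpha})$ by hypothesis, $|z|\in[1-\tfrac1n,1+\tfrac1n]$ gives $|z|^{-n-1}=\mathcal{O}(1)$, and the arclength is $\mathcal{O}(n^{-1})$, so this contributes $\mathcal{O}(n^{-\alpha-1})$. On $\Gamma_{\mathrm{out}}$: $|z|=R$ gives $|z|^{-n-1}=\mathcal{O}(R^{-n})$, $f$ is bounded on this compact subset of $\Delta_{\varepsilon,\phi}\setminus\{1\}$, and the arclength is bounded, so this contributes $\mathcal{O}(R^{-n})=o(n^{-\alpha-1})$. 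On $\Gamma_{\pm}$: since $\cos\theta>0$ there is a constant $c>0$ with $\log|z|\ge c\,t$ for $0\le t\le t_R$, whence $|z|^{-n-1}\le e^{-cnt}$; combining this with $f(z)=\mathcal{O}(|1-z|^{\alpha})=\mathcal{O}(t^{\alpha})$ bounds the contribution by a constant multiple of
\[
  \int_{1/n}^{t_R}t^{\alpha}e^{-cnt}\,\dd t
  =c^{-\alpha-1}n^{-\alpha-1}\int_{c}^{cn t_R}u^{\alpha}e^{-u}\,\dd u
  =\mathcal{O}\!\left(n^{-\alpha-1}\right),
\]
the last integral being bounded, uniformly in $n$, by $\int_{c}^{\infty}u^{\alpha}e^{-u}\,\dd u<\infty$ for every real $\alpha$. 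Summing the three estimates yields $f_n=\mathcal{O}(n^{-\alpha-1})$.

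The estimate on $\Gamma_{\pm}$ is the heart of the argument and the only real obstacle. It is where the $1/n$ scale of the contour is exploited; where the truncation at $t=\tfrac1n$ is indispensable, since it is what tames the otherwise divergent $\int_0^{t_R}t^{\alpha}\,\dd t$ when $\alpha\le-1$; and where the hypothesis $\phi>\tfrac{\pi}{2}$ is genuinely used, because one needs a ray direction $\theta$ that keeps $\Gamma_{\pm}$ inside the wedge $|\arg(1-z)|<\phi$ while still forcing $|z|$ to grow away from $z=1$, and this requires $\theta\in(\pi-\phi,\tfrac{\pi}{2})$. Everything else — notably the verification that $\Gamma$ is a legitimate deformation of $|z|=r$ within $\Delta_{\varepsilon,\phi}\setminus\{0,1\}$ — is geometric bookkeeping.
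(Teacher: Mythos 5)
The paper offers no proof of this theorem; it is quoted directly from Flajolet--Odlyzko, and your argument is a faithful reconstruction of their original proof of Theorem~1: the same Hankel-type contour scaled at distance $1/n$ from $z=1$, with the same three estimates (inner arc $\mathcal{O}(n^{-\alpha-1})$, outer arc exponentially small, rays controlled by $\int_{1/n}^{t_R}t^{\alpha}e^{-cnt}\,\dd t=\mathcal{O}(n^{-\alpha-1})$). Your proof is correct; the only point you gloss over --- that the local bound $f(z)=\mathcal{O}(|1-z|^{\alpha})$ near $z=1$ extends, after enlarging the constant, to a uniform bound $|f(z)|\le K|1-z|^{\alpha}$ on the whole of $\Gamma_{\pm}$ because $f$ is bounded on the compact part of the contour away from $z=1$ --- is routine.
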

As a consequence of this theorem a local expansion around $z=1$ of the
generating function
\begin{equation*}
  f(z)=\sum_{k=0}^K a_k(1-z)^{\alpha_k}+\mathcal{O}\left(|1-z|^\beta\right)
\end{equation*}
for $\alpha_0<\alpha_1<\cdots<\alpha_K<\beta$ translates into an asymptotic
relation for the coefficients
\begin{equation*}
  f_n=\sum_{k=0}^K a_k\binom{n-\alpha_k-1}n+\mathcal{O}(n^{-\beta-1}).
\end{equation*}
Each of the binomial coefficients has an asymptotic expansion in terms of
powers of $n$:
\begin{equation} \label{eq:sing.analysis.binomial.asymptotics}
\binom{n-\alpha_k-1}{n} = \frac{1}{\gammafcn( - \alpha_k )} \, \frac{\gammafcn( n - \alpha_k )}{\gammafcn( n + 1 )} = \frac{n^{-\alpha_k-1}}{\gammafcn( - \alpha_k )} \left\{ 1 + \mathcal{O}\Big( \frac{1}{n} \Big) \right\} \qquad \text{as $n \to \infty$.}
\end{equation}
%

The paper \cite{Flajolet_Odlyzko1990:singularity_analysis_generating} also
contains more general theorems of this type suitable for more complicated
asymptotic behaviour of $f(z)$ for $z\to1$, like logarithmic singularities.

\section{Mellin-Barnes formulas}\label{sec:mell-barn-form}
In order to write the generating function
$\mathcal{I}^{(\lambda;\alpha,\beta)}(z)$ in a more tractable form, we recall
the Mellin-Barnes formula for hypergeometric functions (see
\cite{Andrews_Askey_Roy1999:special_functions,
  Paris_Kaminski2001:asymptotics_mellin_barnes}). This gives
\begin{equation}
  \label{eq:I-mellin-barnes}
  \mathcal{I}^{(\lambda;\alpha,\beta)}(z)=
  \frac{2^{\alpha+\beta+1}\Gamma(2\lambda)}{\Gamma(\lambda)^2(1-z)^{2\lambda}}
  \frac1{2\pi i}\int\limits_{-i\infty}^{i\infty}
  \frac{\Gamma(s+\lambda)^2\Gamma(s+\alpha+1)\Gamma(s+\beta+1)\Gamma(-s)}
  {\Gamma(s+2\lambda)\Gamma(2s+\alpha+\beta+2)}
  \left(\frac{16z}{(1-z)^2}\right)^s \dd s,
\end{equation}
where the contour of integration is taken along the imaginary axis encircling
$s=0$ in the left half-plane such that the poles at $s=-\lambda$, $s=-\alpha-1$,
and $s=-\beta-1$ are to the left of the contour. Similarly, we obtain
\begin{equation}
  \label{eq:J-mellin-barnes}
  \mathcal{J}^{(\lambda;\mu)}(z)=
  \frac{\sqrt\pi \, \Gamma(2\lambda)}{\Gamma(\lambda)^2(1-z)^{2\lambda}}
  \frac1{2\pi i}\int\limits_{-i\infty}^{i\infty}
  \frac{\Gamma(s+\lambda)^2\Gamma(s+\mu+\frac12)\Gamma(-s)}
  {\Gamma(s+2\lambda)\Gamma(s+\mu+1)}\left(\frac{4z}{(1-z)^2}\right)^s 
    \dd s,
  \end{equation}
where the contour is chosen as before, this time leaving $s=-\lambda$ and
$s=-\mu-\frac12$  to the left of the contour.

\section{Jacobi weights, generic case}\label{sec:jacobi-weights}
We use the formula \eqref{eq:I-mellin-barnes} to derive an asymptotic expansion
of $\mathcal{I}^{(\lambda;\alpha,\beta)}(z)$ around $z=1$. This expansion is
then translated into a full asymptotic expansion of
$I_n^{(\lambda;\alpha,\beta)}$ in the generic case. In this case the integrand
in~\eqref{eq:I-mellin-barnes} has simple poles at $-\alpha-1-\ell$,
$-\beta-1-\ell$ and double poles at $-\lambda-\ell$ for
$\ell \in \mathbb{N}_0$. There is no pole cancellation or pole multiplication;
i.e., $\alpha-\lambda$, $\beta-\lambda$, $\alpha-\beta$, $\alpha-2\lambda$,
$\beta-2\lambda$, $\alpha+\beta-2\lambda$, and $\lambda$ are no integers.

Moving the contour in \eqref{eq:I-mellin-barnes} to the left and collecting the
residues at the double poles at $-\lambda-\ell$, we have
\begin{equation*}
\begin{split}
&\frac{\gammafcn( \lambda + \frac{1}{2} ) \gammafcn( 1 + \alpha - \lambda )
  \gammafcn( 1 + \beta - \lambda ) }{2^{2\lambda-1-\alpha-\beta} \sqrt{\pi} \,
  \gammafcn( \lambda ) \gammafcn( 2 + \alpha + \beta - 2\lambda )}\\
&\times z^{-\lambda} \Hypergeom{4}{3}{1-\lambda, \lambda, \frac{2\lambda-\alpha-\beta-1}{2}, \frac{2\lambda-\alpha-\beta}{2}}{1, \lambda - \alpha, \lambda - \beta}{- \frac{(1-z)^2}{4z}} \, \log\frac{1}{1-z}
+ \text{power series in $(1-z)$;}
\end{split}
\end{equation*}
collecting the residues at the simple poles at $-\alpha-1-\ell$, we get
\begin{equation*}
\begin{split}
&\frac{\gammafcn( \lambda + \frac{1}{2} ) \gammafcn( \alpha +1 ) \gammafcn( \lambda - \alpha - 1 )^2 }{2^{3\alpha + 4 - \beta - 2\lambda} \sqrt{\pi} \, \gammafcn( \lambda ) \gammafcn( 2\lambda - \alpha - 1 )} \\
&\times\left( 1 - z \right)^{2 + 2 \alpha - 2 \lambda} z^{-1-\alpha} \Hypergeom{4}{3}{\alpha + 1, \alpha + 2 - 2\lambda, \frac{1+\alpha-\beta}{2}, \frac{2+\alpha-\beta}{2}}{1 + \alpha - \beta, 2 + \alpha - \lambda, 2 + \alpha - \lambda}{- \frac{(1-z)^2}{4z}};
\end{split}
\end{equation*}
and collecting the residues at the simple poles at $-\beta-1-\ell$, we obtain
\begin{equation*}
\begin{split}
&\frac{\gammafcn( \lambda + \frac{1}{2} ) \gammafcn( \beta +1 ) \gammafcn( \lambda - \beta - 1 )^2 }{2^{3\beta + 4 - \alpha - 2\lambda} \sqrt{\pi} \, \gammafcn( \lambda ) \gammafcn( 2\lambda - \beta - 1 )} \\
&\times\left( 1 - z \right)^{2 + 2\beta -2 \lambda} z^{-1-\beta} \Hypergeom{4}{3}{\beta + 1, \beta + 2 - 2\lambda, \frac{1+\beta-\alpha}{2}, \frac{2+\beta-\alpha}{2}}{1 + \beta - \alpha, 2 + \beta - \lambda, 2 + \beta - \lambda}{- \frac{(1-z)^2}{4z}};
\end{split}
\end{equation*}
Thus, we arrive at
\begin{equation} \label{eq:I-asymp-1}
\begin{split}
\mathcal{I}^{(\lambda;\alpha,\beta)}(z) 
&= \sum_{m = 0}^\infty A_m \left( 1 - z \right)^{m + 2 + 2\alpha - 2\lambda} + \sum_{m = 0}^\infty B_m \left( 1 - z \right)^{m + 2 + 2\beta -2 \lambda} \\
&\phantom{=}+ \sum_{m = 0}^\infty D_m \left( 1 - z \right)^m \log \frac{1}{1-z} + \text{power series in $(1-z)$},
\end{split}
\end{equation}
where 
\begin{align*}
A_m 
&\DEF \frac{\gammafcn( \lambda + \frac{1}{2} ) \gammafcn( \alpha +1 ) \gammafcn( \lambda - \alpha - 1 )^2 }{2^{3\alpha + 4 - \beta - 2\lambda} \sqrt{\pi} \, \gammafcn( \lambda ) \gammafcn( 2\lambda - \alpha - 1 )} \sum_{\ell=0}^{\lfloor \frac{m}{2} \rfloor} \frac{\Pochhsymb{\alpha+2-2\lambda}{\ell} \Pochhsymb{1+\alpha-\beta}{2\ell} \Pochhsymb{\alpha+1}{m-\ell}}{\Pochhsymb{1+\alpha-\beta}{\ell} \Pochhsymb{2+\alpha-\lambda}{\ell}^2 \ell! (m - 2\ell)!} \, \frac{(-1)^\ell}{16^\ell}, \\
B_m 
&\DEF \frac{\gammafcn( \lambda + \frac{1}{2} ) \gammafcn( \beta +1 ) \gammafcn( \lambda - \beta - 1 )^2 }{2^{3\beta + 4 - \alpha - 2\lambda} \sqrt{\pi} \, \gammafcn( \lambda ) \gammafcn( 2\lambda - \beta - 1 )} \sum_{\ell=0}^{\lfloor \frac{m}{2} \rfloor} \frac{\Pochhsymb{\beta+2-2\lambda}{\ell} \Pochhsymb{1+\beta-\alpha}{2\ell} \Pochhsymb{\beta+1}{m-\ell}}{\Pochhsymb{1+\beta-\alpha}{\ell} \Pochhsymb{2+\beta-\lambda}{\ell}^2 \ell! (m - 2\ell)!} \, \frac{(-1)^\ell}{16^\ell}, \\
D_m
&\DEF \frac{\gammafcn( \lambda + \frac{1}{2} ) \gammafcn( \alpha + 1 - \lambda ) \gammafcn( \beta + 1 - \lambda ) }{2^{2\lambda-\alpha-\beta-1} \sqrt{\pi} \, \gammafcn( \lambda ) \gammafcn( \alpha + \beta + 2 - 2\lambda )} \sum_{\ell=0}^{\lfloor \frac{m}{2} \rfloor} \frac{\Pochhsymb{1-\lambda}{\ell} \Pochhsymb{2\lambda - \alpha - \beta - 1}{2\ell} \Pochhsymb{\lambda}{m-\ell}}{\Pochhsymb{\lambda-\alpha}{\ell} \Pochhsymb{\lambda-\beta}{\ell} \ell! \ell! (m - 2\ell)!} \, \frac{(-1)^\ell}{16^\ell}.
\end{align*}

The relation \eqref{eq:I-asymp-1} holds as an asymptotic relation at
first. Since $\mathcal{I}^{(\lambda;\alpha,\beta)}(z)$ has an analytic
continuation to $\mathbb{C}\setminus[1,\infty)$ and  satisfies a fourth order
differential equation with regular singular points $0,1,\infty$ (as a
consequence of the representation in terms of hypergeometric functions), it has
a power series representation of the form \eqref{eq:I-asymp-1} with radius of
convergence $\geq1$ by the Frobenius method. The asymptotic expansion has to
coincide with this power series representation.

The terms \emph{power series in} $(1-z)$ correspond to a function holomorphic
around $z=1$. Since this function does not contribute to the asymptotic
expansion we are aiming for, we do not work out these terms, which are slightly
more elaborate than the remaining terms.

By Theorem~\ref{thm:big-o} the local expansion \eqref{eq:I-asymp-1} around
$z=1$ translates into an asymptotic series for the coefficients
\begin{equation}\label{eq:In-asymp}
  \begin{split}
    I_n^{(\lambda;\alpha,\beta)}&\sim
    \frac{(2\lambda)_n}{n!}\Biggl( \sum_{m=0}^\infty
    D_m(-1)^m\frac{m!}{n(n-1)\cdots(n-m)}\\
    &\phantom{equ}+\sum_{m=0}^\infty
      A_m\binom{n+2\lambda-2\alpha-3-m}{n}+ \sum_{m=0}^\infty
      B_m\binom{n+2\lambda-2\beta-3-m}{n}\Biggr).
  \end{split}
\end{equation}

In order to make the results more transparent and applicable, we state the
asymptotic main terms as a theorem.
\begin{thm}\label{thm:jacobi}
  Let $-1<\alpha<\beta$ and $\lambda>0$ be real numbers. Then
  \begin{equation}
    \label{eq:jacobi-asymp-1}
    I_n^{(\lambda;\alpha,\beta)}=
    \begin{cases}
      \dfrac{2^{\alpha+\beta+2-4\lambda} \Gamma(\alpha+1-\lambda)
        \Gamma(\beta+1-\lambda)}{\Gamma(\lambda)^2
        \Gamma(\alpha+\beta+2-2\lambda)} \, n^{2\lambda-2}+\mathcal{O}(n^\eta)
      & \alpha>\lambda-1,\\
      \dfrac{2^{\beta+2-3\lambda}}{\Gamma(\lambda)^2} \, n^{2\lambda-2} 
      \Big(\log n - A(\lambda,\beta) \Big) + \mathcal{O}\Big( n^{2\lambda - 3} \log n \Big) + \mathcal{O}\Big( n^{4\lambda - 2\beta - 5} \Big) & \alpha=\lambda-1,\\
      \dfrac{2^{\beta -  3\alpha - 3} \gammafcn( \alpha +1 ) \gammafcn( \lambda - \alpha - 1 )^2 }{\gammafcn( \lambda )^2 \gammafcn( 2\lambda - \alpha - 1 )} \, 
      n^{4\lambda-2\alpha-4}
      +\mathcal{O}(n^\eta) & \alpha<\lambda-1,
    \end{cases}
  \end{equation}
  where
  \begin{equation*}
    \eta=
    \begin{cases}
      \max(2\lambda-3,4\lambda-2\alpha-4) & \alpha>\lambda-1,\\
      \max(2\lambda-2,4\lambda-2\alpha-5,4\lambda-2\beta-4)& \alpha<\lambda-1,
    \end{cases}
  \end{equation*}
and 
\begin{equation*}
A(\lambda,\beta) = \frac{1}{2} \left( \gamma - 4 \log 2 + \digammafcn( \beta + 1 - \lambda ) + 2 \digammafcn( \lambda ) \right).
\end{equation*}

\end{thm}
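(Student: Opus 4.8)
The plan is to read the whole theorem off the asymptotic series \eqref{eq:In-asymp}, which the preceding discussion has already derived from the Mellin--Barnes integral \eqref{eq:I-mellin-barnes} by contour shifting and singularity analysis. First I would record the order of growth of each of the three families of terms: combining $\frac{(2\lambda)_n}{n!}=\Gamma(n+2\lambda)/(\Gamma(2\lambda)\Gamma(n+1))\sim n^{2\lambda-1}/\Gamma(2\lambda)$ with $\frac{m!}{n(n-1)\cdots(n-m)}\sim m!\,n^{-m-1}$ and, from \eqref{eq:sing.analysis.binomial.asymptotics}, $\binom{n+2\lambda-2\alpha-3-m}{n}\sim n^{2\lambda-2\alpha-3-m}/\Gamma(2\lambda-2\alpha-2-m)$, the $D_m$-terms have order $n^{2\lambda-2-m}$, the $A_m$-terms order $n^{4\lambda-2\alpha-4-m}$, and the $B_m$-terms order $n^{4\lambda-2\beta-4-m}$. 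Since $\alpha<\beta$, the $B$-family is always dominated by the $A$-family, and whether the $D$-family or the $A$-family carries the leading term is decided by the sign of $(2\lambda-2)-(4\lambda-2\alpha-4)=2\alpha+2-2\lambda$, i.e. by the sign of $\alpha-(\lambda-1)$; this produces the three cases.

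For $\alpha>\lambda-1$ the leading term is the $m=0$ term of the $D$-family, $n^{-1}\frac{(2\lambda)_n}{n!}D_0$; inserting $\frac{(2\lambda)_n}{n!}\sim n^{2\lambda-1}/\Gamma(2\lambda)$, the value of $D_0$ (the inner sum defining $D_m$ is $1$ when $m=0$), and the Legendre duplication formula $\Gamma(2\lambda)=\pi^{-1/2}2^{2\lambda-1}\Gamma(\lambda)\Gamma(\lambda+\tfrac12)$ collapses the constant to the one in \eqref{eq:jacobi-asymp-1}. The next largest terms, the $m=1$ term of the $D$-family ($n^{2\lambda-3}$) and the $m=0$ term of the $A$-family ($n^{4\lambda-2\alpha-4}$), give $\eta=\max(2\lambda-3,4\lambda-2\alpha-4)$. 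For $\alpha<\lambda-1$ the roles are reversed: the $m=0$ term of the $A$-family dominates, and the value of $A_0$ together with the duplication formula and the normalising $\Gamma$-factor in \eqref{eq:sing.analysis.binomial.asymptotics} yields the stated constant, while the $m=0$ term of the $D$-family, the $m=1$ term of the $A$-family, and the $m=0$ term of the $B$-family give $\eta=\max(2\lambda-2,4\lambda-2\alpha-5,4\lambda-2\beta-4)$.

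The borderline $\alpha=\lambda-1$ is genuinely non-generic and is where the real work lies: then $-\alpha-1=-\lambda$, so the simple poles of the integrand in \eqref{eq:I-mellin-barnes} at $s=-\alpha-1-\ell$ merge with the double poles at $s=-\lambda-\ell$ into triple poles, $A_0$, $D_0$ and the factor $\Gamma(2\lambda-2\alpha-2)$ all degenerate, and \eqref{eq:In-asymp} no longer applies verbatim. I would redo the contour shift from scratch. Writing the integrand near $s=-\lambda$ as $h(s)(s+\lambda)^{-3}$ with $h$ holomorphic there, one checks $h(-\lambda)=1$ (for $\alpha=\lambda-1$ the denominator factor $\Gamma(2s+\alpha+\beta+2)$ cancels $\Gamma(s+\beta+1)$ at $s=-\lambda$), and the residue of $h(s)x^{s}$ with $x=16z/(1-z)^2$ is $\tfrac12\bigl(h''(-\lambda)+2h'(-\lambda)\log x+h(-\lambda)(\log x)^2\bigr)x^{-\lambda}$. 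Since $\log x=2\log\tfrac1{1-z}+\log(16z)$, this produces a local expansion $C_0\bigl(\log\tfrac1{1-z}\bigr)^2+C_1\log\tfrac1{1-z}+(\text{power series in }1-z)$ at $z=1$, with $C_0$, $C_1$ determined by $h(-\lambda)=1$ and $h'(-\lambda)/h(-\lambda)=3\psi(1)-\psi(\beta+1-\lambda)-2\psi(\lambda)$ (logarithmic differentiation of $h$) — this is where the digamma values in $A(\lambda,\beta)$ come from. Translating via singularity analysis, $[z^n]\bigl(\log\tfrac1{1-z}\bigr)^2=\tfrac2n\sum_{j=1}^{n-1}\tfrac1j=\tfrac1n\bigl(2\log n+2\gamma+O(1/n)\bigr)$ and $[z^n]\log\tfrac1{1-z}=\tfrac1n$, so multiplying by $\frac{(2\lambda)_n}{n!}$ gives $I_n^{(\lambda;\alpha,\beta)}\sim\frac{2C_0}{\Gamma(2\lambda)}n^{2\lambda-2}\bigl(\log n+\gamma+\tfrac{C_1}{2C_0}\bigr)$; inserting $C_0$, $C_1$ and the duplication formula produces the coefficient $2^{\beta+2-3\lambda}/\Gamma(\lambda)^2$ and the constant $A(\lambda,\beta)=\tfrac12(\gamma-4\log2+\psi(\beta+1-\lambda)+2\psi(\lambda))$, the errors $\mathcal O(n^{2\lambda-3}\log n)$ and $\mathcal O(n^{4\lambda-2\beta-5})$ coming from the $\ell=1$ triple pole and the leading simple pole at $s=-\beta-1$.

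The generic two cases are routine bookkeeping with the duplication formula; the obstacle is the $\alpha=\lambda-1$ computation. To pin down $A(\lambda,\beta)$ one must retain the full coefficient of $\log\tfrac1{1-z}$ in the triple-pole residue — both $h'(-\lambda)$ and the $\log16$ piece of $\log x$ — and then verify that neither the $O(1/n)$ correction in $H_{n-1}=\log n+\gamma+O(1/n)$ nor the deviation of the holomorphic prefactor $z^{-\lambda}$ from $1$ at $z=1$ affects the displayed constant (only the $n^{2\lambda-3}\log n$ error), and that the remaining merged triple poles at $s=-\lambda-\ell$ with $\ell\ge1$ and the surviving simple poles at $s=-\beta-1-\ell$ stay within the stated error terms.
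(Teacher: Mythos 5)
Your proposal follows the same route as the paper's proof: the two generic cases are read off from \eqref{eq:In-asymp} by comparing the orders $n^{2\lambda-2-m}$, $n^{4\lambda-2\alpha-4-m}$, $n^{4\lambda-2\beta-4-m}$ of the three families, and the borderline case $\alpha=\lambda-1$ is handled by recomputing the residue at the now-triple pole of \eqref{eq:I-mellin-barnes} at $s=-\lambda$ and translating the resulting $(\log\frac1{1-z})^2$ and $\log\frac1{1-z}$ terms by singularity analysis. Your triple-pole computation is correct and actually more explicit than the paper's: $h(-\lambda)=1$, $h'(-\lambda)=-3\gamma-\digammafcn(\beta+1-\lambda)-2\digammafcn(\lambda)$ together with the $\log 16$ piece of $\log x$ reproduce the paper's local expansion and the constant $A(\lambda,\beta)$ exactly. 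One concrete warning about the ``routine bookkeeping'' you defer in the case $\alpha<\lambda-1$: the quantity your plan prescribes, namely $A_0\,n^{2\lambda-1}\Gamma(2\lambda)^{-1}\cdot n^{2\lambda-2\alpha-3}\Gamma(2\lambda-2\alpha-2)^{-1}$ with the normalising factor from \eqref{eq:sing.analysis.binomial.asymptotics}, does \emph{not} simplify to the constant printed in \eqref{eq:jacobi-asymp-1}; it equals that constant divided by $\gammafcn(2\lambda-2\alpha-2)$. The printed third-case constant appears to be missing this factor: the corrected value is the one consistent with the Gegenbauer-weight theorem upon setting $\alpha=\beta=\mu-\frac12$ (where both endpoints contribute, doubling the constant), it checks out against a direct Mehler--Heine/Weber--Schafheitlin computation for $\lambda=\frac12$, and only with this factor does the case-3 constant have a \emph{simple} pole as $\alpha\to\lambda-1$, as it must in order to merge with the simple pole of the case-1 constant into the logarithm of case 2 (the limit procedure the paper itself suggests). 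So your method is sound; just do not force the computation to land on the displayed formula, and expect a similar small mismatch in the error exponent $4\lambda-2\beta-5$ (the $B_0$-term genuinely contributes at order $n^{4\lambda-2\beta-4}$).
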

\begin{proof}
  The asymptotic main term in \eqref{eq:In-asymp} is given by the first term of
  the first series, if $\alpha>\lambda-1$; it is given by the first term of the
  second series, if $\alpha<\lambda-1$. Except when $\alpha = \lambda - 1$, this also holds in the non-generic case, since only higher order asymptotic terms would be affected. 
%
%

It remains to discuss the case $\alpha=\lambda-1$. 
In this case, when taking into account a triple pole of the integrand in \eqref{eq:I-mellin-barnes} at $-\lambda$, the generating function has the local expansion
\begin{equation*}
\begin{split}
\mathcal{I}^{(\lambda;\lambda-1,\beta)}
&= 2^{\beta-\lambda} \frac{\gammafcn( \lambda + \frac{1}{2} )}{\sqrt{\pi} \, \gammafcn( \lambda )} \Bigg( \left( \log \frac{1}{1-z} \right)^2 - \Big( 3 \gamma - 4 \log 2  + \digammafcn( 1 + \beta - \lambda ) + 2 \digammafcn( \lambda ) \Big) \log \frac{1}{1-z} \Bigg) \\
&\phantom{=}+ C + \mathcal{O}\Big( (1 - z) ( \log( 1 - z ) )^2 \Big) + \mathcal{O}\Big( \left( 1 - z \right)^{2( \beta + 1 - \lambda )} \Big),
\end{split}
\end{equation*}
where $C$ is a constant that will play no role in the singularity analysis.  
This relation translates into the asymptotic expansion
\begin{equation*}
\begin{split}
I_n^{(\lambda;\lambda-1,\beta)}
&= \frac{\Pochhsymb{2\lambda}{n}}{n!} \, 2^{\beta-\lambda} \frac{\gammafcn( \lambda + \frac{1}{2} )}{\sqrt{\pi} \, \gammafcn( \lambda )} \, \Bigg( \frac{2}{n} \sum_{k=1}^{n-1} \frac{1}{k} - \Big( 3 \gamma - 4 \log 2  + \digammafcn( 1 + \beta - \lambda ) + 2 \digammafcn( \lambda ) \Big) \, \frac{1}{n} \\
&\phantom{=+}+ \mathcal{O}\Big( \frac{\log n}{n^2} \Big) + \mathcal{O}\Big( \frac{1}{n^{2( \beta + 1 - \lambda )+1}} \Big) \Bigg).
\end{split}
\end{equation*}
%
%
%
Using the asymptotic expansion
\begin{equation*}
  \sum_{k=1}^{n-1} \frac1k = \digammafcn( n ) + \gamma = \log n + \gamma +\mathcal{O}\Big(\frac1n\Big),
\end{equation*}
we obtain the stated expression. 

We remark that the leading asymptotic term in the case $\alpha = \lambda - 1$ could be obtained by taking the limit as $a \to \lambda-1$ in 
\begin{equation*}
\frac{\Pochhsymb{2\lambda}{n}}{n!} \left( \frac{D_0}{n} + A_0 \binom{n+2\lambda-2\alpha-3}{n} \right)
\end{equation*}
derived from the general asymptotics~\eqref{eq:In-asymp}.
\end{proof}

\section{Gegenbauer weights, generic case}\label{sec:gegenbauer-weights}
The Gegenbauer weights are a special case of Jacobi weights in the non-generic setting. We 
present the asymptotic evaluation of the integrals
%
\begin{equation}\label{eq:Jn-def}
  J_n^{(\lambda;\mu)} \DEF \int_{-1}^1\left( \GegenbauerC_n^{(\lambda)}(x)\right)^2 (1-x^2)^{\mu-\frac12}\dd x  = I_n^{(\lambda;\mu-\frac12,\mu-\frac12)}
\end{equation}
using the more appropriate specialisation $\alpha = \beta = \mu - \frac{1}{2}$, so that for $\mu = \lambda$ we get back the well-known result~\eqref{eq:gegenbauerC.orthogonality.relation}.  
%
%

Because of the symmetry in the Gegenbauer weight, there is a second obvious approach to a formula for $J_n^{(\lambda;\mu)}$ based on
connection formulas between $\GegenbauerC_n^{(\lambda)}$ and $\GegenbauerC_n^{(\mu)}$.
\begin{thm}\label{thm:gegen} Let $\mu > -\frac{1}{2}$ and $\lambda > 0$. Let $J_n^{(\lambda;\mu)}$ be given by
  \eqref{eq:Jn-def}. Then we have
  \begin{equation*}
    J_n^{(\lambda;\mu)}=\frac{\sqrt\pi\,\Gamma(\mu+\frac12)}{\Gamma(\mu+1)}
    \left(\frac{(2\lambda)_n}{n!}\right)^2
    \Hypergeom{4}{3}{-n,n+2\lambda,\lambda,\mu+\frac12}
    {2\lambda,\lambda+\frac12,\mu+1}{1};
  \end{equation*}
  alternatively, we have
  \begin{equation}\label{eq:Jn-connect}
    J_n^{(\lambda;\mu)}=\frac{\sqrt\pi\,\Gamma(\mu+\frac12)}{\mu \, \Gamma(\mu+1)}
    \sum_{k=0}^{\lfloor\frac n2\rfloor} 
    \frac{(\lambda)_{n-k}^2(\lambda-\mu)_k^2}{(\mu+1)_{n-k}^2(k!)^2}
    (n+\mu-2k) \frac{(2\mu)_{n-2k}}
    {(n-2k)!}.
  \end{equation}
\end{thm}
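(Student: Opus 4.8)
The first formula for $J_n^{(\lambda;\mu)}$ is an immediate specialisation of Theorem~\ref{thm:explicit}: setting $\alpha=\beta=\mu-\tfrac12$, the prefactor $2^{\alpha+\beta+1}\gammafcn(\alpha+1)\gammafcn(\beta+1)/\gammafcn(\alpha+\beta+2)$ collapses via the duplication formula to $\sqrt\pi\,\gammafcn(\mu+\tfrac12)/\gammafcn(\mu+1)$, and the $\HyperpFq54$ reduces to the $\HyperpFq43$ as already observed in Remark~\ref{rmk:explicit} (the two upper parameters $\alpha+1=\mu+\tfrac12$ and $\beta+1=\mu+\tfrac12$ coincide, and one of them cancels against one of the lower parameters $\frac{\alpha+\beta+2}{2}=\mu+\tfrac12$; the other lower parameter is $\frac{\alpha+\beta+3}{2}=\mu+1$). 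This requires only bookkeeping.

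For the second formula \eqref{eq:Jn-connect}, the plan is to use the connection (linearisation) formula expressing $\GegenbauerC_n^{(\lambda)}$ in the Gegenbauer basis of index $\mu$. The classical expansion (see e.g.\ the reference \cite{SanchezRuiz2001:linearization_and_connection_formula} already cited in the proof of Theorem~\ref{thm:explicit}, or \cite{Andrews_Askey_Roy1999:special_functions}) reads
\begin{equation*}
  \GegenbauerC_n^{(\lambda)}(x)=\sum_{k=0}^{\lfloor n/2\rfloor}
  c_{n,k}\,\GegenbauerC_{n-2k}^{(\mu)}(x),\qquad
  c_{n,k}=\frac{(n-2k+\mu)}{\mu}\,\frac{\Pochhsymb{\lambda}{n-k}\,\Pochhsymb{\lambda-\mu}{k}}{\Pochhsymb{\mu+1}{n-k}\,k!}.
\end{equation*}
Substituting this into $J_n^{(\lambda;\mu)}=\int_{-1}^1(\GegenbauerC_n^{(\lambda)}(x))^2(1-x^2)^{\mu-\frac12}\dd x$, expanding the square, and using orthogonality of $\GegenbauerC_{n-2k}^{(\mu)}$ and $\GegenbauerC_{n-2j}^{(\mu)}$ with respect to $(1-x^2)^{\mu-\frac12}$ kills all cross terms $k\neq j$, leaving
\begin{equation*}
  J_n^{(\lambda;\mu)}=\sum_{k=0}^{\lfloor n/2\rfloor}c_{n,k}^2\,
  \int_{-1}^1\bigl(\GegenbauerC_{n-2k}^{(\mu)}(x)\bigr)^2(1-x^2)^{\mu-\frac12}\dd x.
\end{equation*}
Now insert the known $L^2$-norm \eqref{eq:gegenbauerC.orthogonality.relation} with $n$ replaced by $n-2k$ and $\lambda$ replaced by $\mu$; this produces $\frac{\sqrt\pi\,\gammafcn(\mu+\frac12)}{\gammafcn(\mu+1)}\cdot\frac{\mu}{n-2k+\mu}\cdot\frac{\Pochhsymb{2\mu}{n-2k}}{(n-2k)!}$. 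The factor $\frac{\mu}{n-2k+\mu}$ cancels one power of $\frac{(n-2k+\mu)}{\mu}$ coming from $c_{n,k}^2$, so that after collecting constants one is left precisely with $\frac{\sqrt\pi\,\gammafcn(\mu+\frac12)}{\mu\,\gammafcn(\mu+1)}$ times the stated sum over $k$. Matching the surviving Pochhammer factors against \eqref{eq:Jn-connect} is then routine.

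The only genuine obstacle is pinning down the exact connection coefficients $c_{n,k}$ with the chosen normalisation of the $\GegenbauerC_n^{(\lambda)}$ (the one fixed by the generating function in Section~\ref{sec:notation}); different sources normalise Gegenbauer polynomials differently, so the constants $\Pochhsymb{\lambda}{n-k}/\Pochhsymb{\mu+1}{n-k}$ and the $(n-2k+\mu)/\mu$ prefactor must be verified against that normalisation, e.g.\ by checking the top-degree coefficient or by evaluating at $x=1$ using \eqref{eq:Cnlambda.of.one}. Once the connection formula is correctly calibrated, the rest is an orthogonality argument plus the algebraic simplification described above.
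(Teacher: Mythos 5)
Your proposal is correct and follows essentially the same route as the paper: the first identity by specialising Theorem~\ref{thm:explicit} to $\alpha=\beta=\mu-\frac12$, and the second via the connection formula $\GegenbauerC_n^{(\lambda)}(x)=\sum_{k}\frac{(\lambda)_{n-k}(\lambda-\mu)_k}{(\mu+1)_{n-k}k!}\frac{n+\mu-2k}{\mu}\GegenbauerC_{n-2k}^{(\mu)}(x)$ (which is exactly \cite[(7.1.11)]{Andrews_Askey_Roy1999:special_functions}, so your worry about calibrating the normalisation is already resolved) combined with orthogonality and \eqref{eq:gegenbauerC.orthogonality.relation}. The bookkeeping you describe checks out and reproduces \eqref{eq:Jn-connect}.
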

\begin{proof}
  The first equation is an immediate consequence of Theorem~\ref{thm:explicit}
  after specialising $\alpha=\beta=\mu-\frac12$. The second expression can be
  obtained using the connection formula
  \begin{equation*}
    \GegenbauerC_n^{(\lambda)}(x)=\sum_{k=0}^{\lfloor\frac n2\rfloor}
    \frac{(\lambda)_{n-k}(\lambda-\mu)_k}{(\mu+1)_{n-k}k!}\frac{n+\mu-2k}\mu
    \GegenbauerC_{n-2k}^{(\mu)}(x)
  \end{equation*}
  (see \cite[(7.1.11)]{Andrews_Askey_Roy1999:special_functions}) and relation~\eqref{eq:gegenbauerC.orthogonality.relation}. 
\end{proof}
\begin{rmk}
  The two formulas for $J_n^{(\lambda;\mu)}$ give the identity
  \begin{equation*}
   \sum_{k=0}^{\lfloor\frac n2\rfloor}
    \frac{(\lambda)_{n-k}^2(\lambda-\mu)_k^2(2\mu)_{n-2k}(n+\mu-2k)}
    {(\mu+1)_{n-k}^2(k!)^2(n-2k)!}=\mu  \left(\frac{(2\lambda)_n}{n!}\right)^2
    \Hypergeom{4}{3}{-n,n+2\lambda,\lambda,\mu+\frac12}
    {2\lambda,\lambda+\frac12,\mu+1}{1}.
  \end{equation*}
  Notice that the sum on the left-hand side has only positive terms, whereas
  the (implicit) sum on the right-hand side is alternating. Alternatively, this
  identity could be proved using Zeilberger's algorithm (see
  \cite{Kauers_Paule2011:concrete_tetrahedron}). With the help of
  \texttt{Mathematica} and using the implementation
  \cite{RISC:computer_algebra_combinatorics} of Zeilberger's algorithm we found
  that both expressions satisfy the linear recurrence relation
  \begin{equation*}
    (n+2\lambda)^2 (n+2\lambda-\mu)J_n^{(\lambda;\mu)} -
    2(n+\lambda+1)(n^2+2(\lambda+1)n+ 3 \lambda+1)J_{n+1}^{(\lambda;\mu)}
    +(n+2)^2 (n + \mu + 2) J_{n+2}^{(\lambda;\mu)}=0.
  \end{equation*}
This relation could be used to give an independent proof of above identity.
\end{rmk}
The generating function
\begin{equation*}
  \mathcal{J}^{(\lambda;\mu)}(z)=\sum_{n=0}^\infty
  \frac{n!}{(2\lambda)_n}J_n^{(\lambda;\mu)}z^n
\end{equation*}
satisfies the differential equation
\begin{multline*}
  z^2(z-1)^2y^{\prime\prime\prime} + \left( z - 1 \right) z \left( (4 ( \lambda + 1 ) -\mu ) z - 2 ( \lambda + 1 ) -\mu \right) y^{\prime\prime} \\
  +2\left( \left( \left( \lambda + 1 \right) \left( 2( \lambda + 1 ) - \mu \right) - 1 \right) z^2 -
     \left(2 \left( \lambda + 1 \right)^2 - 1 \right) z+ \lambda  (\mu +1) \right) y'\\
  +2 \lambda  ((2 \lambda-\mu)  z-\lambda)y=0.
\end{multline*}
This is a third order differential equation with regular singular points at
$0,1,\infty$. The indicial equation at $z=1$ reads as
\begin{equation*}
  x^2 (x+ 2 \lambda - 2 \mu-1)=0.
\end{equation*}
Thus, the fundamental solutions about $1$ will take the form of a power series
in $(1-z)$, $\log \frac{1}{1-z}$ times a power series in $(1-z)$, and
$(1-z)^{1+2\mu-2\lambda}$ times a power series in $(1-z)$.

From Remark~\ref{rmk:generating.function.relation} we have that
\begin{equation}\label{eq:Jz}
  \mathcal{J}^{(\lambda;\mu)}(z)=
  \frac{\sqrt\pi \, \Gamma(\mu+\frac12)}{\Gamma(\mu+1)}
  \frac1{(1-z)^{2\lambda}}\Hypergeom{3}{2}{\lambda,\lambda,\mu+\frac12}
  {2\lambda,\mu+1}{-\frac{4z}{(1-z)^2}}.
\end{equation}
The asymptotic behaviour of $J_n^{(\lambda;\mu)}$ is encoded in the behaviour
of the function $\mathcal{J}^{(\lambda;\mu)}(z)$ around $z=1$ (see
\cite{Flajolet_Odlyzko1990:singularity_analysis_generating}). 
 To obtain this local expansion we proceed as before by using
 \eqref{eq:J-mellin-barnes} and shifting the line of integration to the left. 

We consider the generic case for $\lambda \notin \mathbb{Z}$, $\mu + 1 - \lambda \notin \mathbb{Z}$, $\mu + \frac{1}{2} - 2\lambda \notin \mathbb{Z}$, and $\mu + \frac{1}{2} - \lambda \notin \mathbb{Z}$. 

 Collecting residues at the double poles $-\lambda-\ell$, $\ell \in \mathbb{N}_0$, we get 
 \begin{align*}
 \frac{\gammafcn( \lambda + \frac{1}{2} ) \gammafcn( \frac{1}{2} + \mu - \lambda )}{\gammafcn( \lambda ) \gammafcn( 1 + \mu - \lambda )} 
 &\Bigg( z^{-\lambda} \Hypergeom{3}{2}{1-\lambda, \lambda, \lambda - \mu}{1, \frac{1}{2} + \lambda - \mu}{- \frac{( 1 - z )^2}{4z}} \, \log \frac{1}{1-z} \\
 &\phantom{\Bigg( }+ \text{power series in $(1-z)$} \Bigg).
 \end{align*}
Collecting residues at the simple poles $-\mu-\frac{1}{2}-\ell$, $\ell \in \mathbb{N}_0$, we obtain
\begin{equation*}
\frac{\gammafcn( \lambda + \frac{1}{2} ) \gammafcn( \lambda - \mu - \frac{1}{2} )^2 \gammafcn( \mu + \frac{1}{2} ) }{4^{1+\mu-\lambda} \sqrt{\pi} \, \gammafcn( \lambda ) \gammafcn( 2 \lambda - \mu - \frac{1}{2} )} \, \frac{\left( 1 - z \right)^{1+2\mu-2\lambda}}{ z^{\mu+\frac{1}{2}}} \Hypergeom{3}{2}{\frac{1}{2}, \mu + \frac{1}{2}, \frac{3}{2} + \mu - 2 \lambda}{\frac{3}{2} + \mu - \lambda, \frac{3}{2} + \mu - \lambda}{- \frac{( 1 - z )^2}{4z}}.
\end{equation*}
Thus, we arrive at
\begin{equation*}
\mathcal{J}^{(\lambda;\mu)}(z) = \sum_{m=0}^\infty A_m \left( 1 - z \right)^m \log \frac{1}{1-z} + \sum_{m=0}^\infty B_m \left( 1 - z \right)^{1+2\mu-2\lambda+m} + \text{power series in $(1-z)$},
\end{equation*}
where 
\begin{align*}
A_m &= \frac{\gammafcn( \lambda + \frac{1}{2} ) \gammafcn( \frac{1}{2} + \mu - \lambda )}{\gammafcn( \lambda ) \gammafcn( 1 + \mu - \lambda )}  \sum_{\ell=0}^{\lfloor \frac{m}{2} \rfloor} \frac{\Pochhsymb{1-\lambda}{\ell} \Pochhsymb{\lambda-\mu}{\ell} \Pochhsymb{\lambda}{m-\ell}}{ \Pochhsymb{\frac{1}{2}+\lambda-\mu}{\ell} \ell! \ell! (m-2\ell)!} \, \frac{(-1)^\ell}{4^\ell}, \\
B_m &= \frac{\gammafcn( \lambda + \frac{1}{2} ) \gammafcn( \lambda - \mu - \frac{1}{2} )^2 \gammafcn( \mu + \frac{1}{2} ) }{4^{1+\mu-\lambda} \sqrt{\pi} \, \gammafcn( \lambda ) \gammafcn( 2 \lambda - \mu - \frac{1}{2} )}\sum_{\ell=0}^{\lfloor \frac{m}{2} \rfloor} \frac{\Pochhsymb{\frac{1}{2}}{\ell} \Pochhsymb{\frac{3}{2} + \mu - 2 \lambda}{\ell} \Pochhsymb{\mu + \frac{1}{2}}{m-\ell}}{\Pochhsymb{\frac{3}{2}+\mu-\lambda}{\ell} \Pochhsymb{\frac{3}{2}+\mu-\lambda}{\ell} \ell! (m - 2\ell)!} \, \frac{(-1)^\ell}{4^\ell}.
\end{align*}

Using singularity analysis, this translates into the following asymptotic series:
\begin{equation} \label{eq:Jn.lambda.mu.asymptotics.generic}
J_n^{(\lambda;\mu)} = \frac{(2\lambda)_n}{n!} \left( \sum_{m=0}^\infty (-1)^m \frac{m!}{n(n-1)\cdots(n-m)} A_m + \sum_{m=0}^\infty \binom{n+2\lambda-2\mu-2-m}{n} B_m  \right).
\end{equation}
%
%


Regarding the leading term, we have the following result. 
\begin{thm}
  Let $\mu > -\frac12$ and $\lambda>0$. Then 
  \begin{equation}
    \label{eq:jacobi-asymp}
    J_n^{(\lambda;\mu)}=
    \begin{cases}
      \dfrac{\sqrt\pi \, \Gamma(\mu+\frac12-\lambda)}{2^{2\lambda-1} \Gamma(\lambda)^2
      \Gamma(\mu+1-\lambda)} \, n^{2\lambda-2} + \mathcal{O}(n^\eta)
     &\mu>\lambda-\frac12,\\
      \dfrac{1}{2^{2\lambda-2} \Gamma(\lambda)^2} \, n^{2\lambda-2} \left(\log n + 2 \log 2 - \digammafcn( \lambda ) \right) + \mathcal{O}\left(n^{2\lambda-3} \log n \right)
      &\mu=\lambda-\frac12,\\
      \dfrac{\sqrt{\pi} \, \Gamma(\lambda-\mu-\frac12) \Gamma(\mu+\frac12)}
      {2^{2\lambda-1} \Gamma(\lambda)^2 \gammafcn( \lambda - \mu ) \Gamma(2\lambda-\mu-\frac12)}
      n^{4\lambda-2\mu-3}+\mathcal{O}(n^\eta)&\mu<\lambda-\frac12,
    \end{cases}
  \end{equation}
  where
  \begin{equation*}
    \eta=
    \begin{cases}
      \max(2\lambda-3,4\lambda-2\mu-3)&\mu>\lambda-\frac12,\\
      \max(2\lambda-2,4\lambda-2\mu-4)&\mu<\lambda-\frac12.
    \end{cases}
  \end{equation*}
\end{thm}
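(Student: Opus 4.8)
The plan is to follow the proof of Theorem~\ref{thm:jacobi}: extract the leading term of the asymptotic series \eqref{eq:Jn.lambda.mu.asymptotics.generic} when $\mu\neq\lambda-\frac12$, and treat $\mu=\lambda-\frac12$ separately as the exceptional value.

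In \eqref{eq:Jn.lambda.mu.asymptotics.generic} the slowest-decaying contribution is the $m=0$ term of one of the two series. Using $\frac{(2\lambda)_n}{n!}\sim\frac{n^{2\lambda-1}}{\gammafcn(2\lambda)}$, the $m=0$ term of the first series is $\sim\frac{A_0}{\gammafcn(2\lambda)}\,n^{2\lambda-2}$ with $A_0=\frac{\gammafcn(\lambda+\frac12)\gammafcn(\frac12+\mu-\lambda)}{\gammafcn(\lambda)\gammafcn(1+\mu-\lambda)}$, while by \eqref{eq:sing.analysis.binomial.asymptotics} the $m=0$ term of the second series is $\sim\frac{B_0}{\gammafcn(2\lambda)\gammafcn(2\lambda-2\mu-1)}\,n^{4\lambda-2\mu-3}$ with $B_0=\frac{\gammafcn(\lambda+\frac12)\gammafcn(\lambda-\mu-\frac12)^2\gammafcn(\mu+\frac12)}{4^{1+\mu-\lambda}\sqrt\pi\,\gammafcn(\lambda)\gammafcn(2\lambda-\mu-\frac12)}$. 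Since $2\lambda-2\gtrless4\lambda-2\mu-3$ precisely when $\mu\gtrless\lambda-\frac12$, the first series dominates for $\mu>\lambda-\frac12$ and the second for $\mu<\lambda-\frac12$; in each case I simplify the leading coefficient with the Legendre duplication formula $\gammafcn(2\lambda)=\frac{2^{2\lambda-1}}{\sqrt\pi}\gammafcn(\lambda)\gammafcn(\lambda+\frac12)$ — applied as well to $\gammafcn(2\lambda-2\mu-1)=\gammafcn\!\big(2(\lambda-\mu-\tfrac12)\big)$ in the second case — to reach the first and third lines of \eqref{eq:jacobi-asymp}. The error exponent $\eta$ collects the next term of the dominant series, the leading term of the subdominant series, and the $\mathcal{O}(1/n)$ corrections in $\frac{(2\lambda)_n}{n!}$ and in \eqref{eq:sing.analysis.binomial.asymptotics}, which gives $\eta=\max(2\lambda-3,4\lambda-2\mu-3)$ for $\mu>\lambda-\frac12$ and $\eta=\max(2\lambda-2,4\lambda-2\mu-4)$ for $\mu<\lambda-\frac12$. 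Finally, as \eqref{eq:Jn.lambda.mu.asymptotics.generic} was derived under genericity assumptions, I observe that every non-generic parameter value other than $\mu=\lambda-\frac12$ affects only lower-order terms — the corresponding pole coalescences and pole--zero cancellations occur at order $(1-z)^\theta$ with $\theta>0$ — so the leading term is unchanged.

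It remains to treat $\mu=\lambda-\frac12$, the exceptional value at which $\gammafcn(\frac12+\mu-\lambda)$ and $\gammafcn(\lambda-\mu-\frac12)$ both acquire a pole, the exponents $2\lambda-2$ and $4\lambda-2\mu-3$ coincide, and the third indicial root $1+2\mu-2\lambda$ of the ODE for $\mathcal{J}^{(\lambda;\mu)}$ collapses onto the double root $0$, so a $\big(\log\frac1{1-z}\big)^2$ solution appears — exactly the situation of the case $\alpha=\lambda-1$ in Theorem~\ref{thm:jacobi}. There are two natural routes. First, return to \eqref{eq:J-mellin-barnes}: for $\mu+\frac12=\lambda$ the numerator becomes $\gammafcn(s+\lambda)^3\gammafcn(-s)$, so shifting the contour now picks up a \emph{triple} pole at $s=-\lambda$ (and at $s=-\lambda-\ell$); evaluating its residue — which requires the second-order Taylor expansion at $s=-\lambda$ of the regular factor $\gammafcn(-s)/\big(\gammafcn(s+2\lambda)\gammafcn(s+\mu+1)\big)$ together with $\big(\tfrac{4z}{(1-z)^2}\big)^s$, hence digamma and trigamma values — yields a local expansion $c_2\big(\log\frac1{1-z}\big)^2+c_1\log\frac1{1-z}+C+\mathcal{O}\big((1-z)(\log(1-z))^2\big)$. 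Singularity analysis converts $\big(\log\frac1{1-z}\big)^2$ into $\frac2n\sum_{k=1}^{n-1}\frac1k$ and $\log\frac1{1-z}$ into $\frac1n$, and with $\sum_{k=1}^{n-1}\frac1k=\log n+\gamma+\mathcal{O}(1/n)$ and the duplication formula this produces the middle line of \eqref{eq:jacobi-asymp} with error $\mathcal{O}(n^{2\lambda-3}\log n)$. Second — the shortcut mentioned at the end of the proof of Theorem~\ref{thm:jacobi} — set $\mu=\lambda-\frac12-\delta$ in $\frac{(2\lambda)_n}{n!}\big(\frac{A_0}{n}+B_0\binom{n+2\lambda-2\mu-2}{n}\big)$ and let $\delta\to0$: the poles in $\delta$ of $A_0$ and of $B_0\binom{n+2\lambda-2\mu-2}{n}\sim B_0\,n^{2\delta-1}/\gammafcn(2\delta)$ cancel, $n^{2\delta-1}=\frac1n(1+2\delta\log n+\cdots)$ supplies the $\log n$, and the finite remainder, after inserting $\digammafcn(\frac12)=-\gamma-2\log2$ and the duplication formula, equals $\log n+2\log2-\digammafcn(\lambda)$.

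The only genuine difficulty is this critical case. Along the first route one evaluates a residue at a triple pole, i.e.\ expands a ratio of Gamma functions to second order in $(s+\lambda)$, bringing in $\digammafcn$ and $\digammafcn'$ and then carefully discarding the non-singular contributions. Along the second route the delicacy is that $A_0\sim\delta^{-1}$ and $B_0\sim\delta^{-2}$, tamed only by the factor $\delta^{-1}$ inside $n^{2\delta-1}/\gammafcn(2\delta)$, so the cancellation must retain every $\mathcal{O}(\delta)$ term from $\gammafcn(\lambda\pm\delta)$, $\gammafcn(\frac12-\delta)$, $\gammafcn(1+2\delta)$ and $2^{2\delta}$; one should also note that interchanging $\delta\to0$ with the $n\to\infty$ asymptotics is justified by the analytic dependence of $J_n^{(\lambda;\mu)}$ on $\mu$. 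Everything else is routine once the Gamma-function identities have been applied.
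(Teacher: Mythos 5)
Your proposal is correct and follows essentially the same route as the paper: in the generic cases you read off the $m=0$ terms of the two series in \eqref{eq:Jn.lambda.mu.asymptotics.generic} and simplify with the duplication formula, and for $\mu=\lambda-\frac12$ your primary route (the triple pole of the Mellin--Barnes integrand \eqref{eq:J-mellin-barnes} at $s=-\lambda$, yielding the $\bigl(\log\frac1{1-z}\bigr)^2$ expansion and then singularity analysis) is exactly the paper's argument, while your second route is the limiting alternative the paper itself mentions. The coefficient computations you sketch check out against \eqref{eq:jacobi-asymp}.
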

\begin{proof}
In the generic case, the asymptotic terms can be read off from the general asymptotics \eqref{eq:Jn.lambda.mu.asymptotics.generic} (setting $m=0$) and using \eqref{eq:Cnlambda.of.one} and \eqref{eq:sing.analysis.binomial.asymptotics}. The leading term (and the remainder term) is still valid in the non-generic case, except for $\mu = \lambda - \frac{1}{2}$, when the first two terms need to be combined to allow for a cancellation of the poles of $\Gamma(\mu+\frac12-\lambda)$ and $\Gamma(\lambda-\mu-\frac12)$ as $\mu \to \lambda - \frac{1}{2}$.  

It only remains to study the case $\mu=\lambda-\frac12$.
  Then the generating function becomes
\begin{equation*}
  \mathcal{J}^{(\lambda;\lambda-\frac12)}(z)
  =\frac{\sqrt\pi \,\Gamma(\lambda)}{\Gamma(\lambda+\frac12)}
  \frac1{(1-z)^{2\lambda}}\Hypergeom{3}{2}{\lambda,\lambda,\lambda}
  {2\lambda,\lambda+\frac12}{-\frac{4z}{(1-z)^2}}.
\end{equation*}
In this case the Mellin-Barnes formula reads as
\begin{equation}
  \label{eq:lambda-1/2}
  \mathcal{J}^{(\lambda;\lambda-\frac12)}(z)=
  \frac{\sqrt\pi \, \Gamma(2\lambda)}{\Gamma(\lambda)^2(1-z)^{2\lambda}}
  \frac1{2\pi i}\int\limits_{-i\infty}^{i\infty}
  \frac{\Gamma(s+\lambda)^3\Gamma(-s)}
  {\Gamma(s+2\lambda)\Gamma(s+\lambda+\frac12)}\left(\frac{4z}{(1-z)^2}\right)^s
  \dd s;
\end{equation}
where the integrand has triple poles at $s+\lambda\in-\mathbb{N}_0$.
Shifting the line of integration to $\Re(s)=-\lambda-1$ gives
\begin{equation}\label{eq:special-residue}
\begin{split}
  \mathcal{J}^{(\lambda;\lambda-\frac12)}(z)&=
  \frac{\Gamma(\lambda + \frac{1}{2})}{\sqrt\pi \, \Gamma(\lambda)} \, z^{-\lambda}
  \left(\gamma - \log 2 + \psi(\lambda) - \frac{1}{2} \log\frac{4z}{(1-z)^2} \right)^2\\
&+
  \frac{\sqrt\pi\,\Gamma(2\lambda)}{\Gamma(\lambda)^2(1-z)^{2\lambda}}
  \frac1{2\pi i}\int\limits_{-\lambda-1-i\infty}^{-\lambda-1+i\infty}
  \frac{\Gamma(s+\lambda)^3\Gamma(-s)}
  {\Gamma(s+2\lambda)\Gamma(s+\lambda+\frac12)}\left(\frac{4z}{(1-z)^2}\right)^s
  \dd s.
\end{split}
\end{equation}
The contour of integration is chosen along the vertical line
$\Re(s)=-\lambda-1$ with a small circular arc encircling $-\lambda-1$ to the
right.

The integral is $\mathcal{O}((1-z)^2(\log(1-z))^2)$, thus the first term is the
asymptotic main term. We note that in principle a full asymptotic expansion of
$\mathcal{J}^{(\lambda;\lambda-\frac12)}(z)$ around $z=1$ could be developed by
shifting the line of integration further to the left. The terms originating
from the residues of the triple poles at $s=-\lambda-\ell$, $\ell \in \mathbb{N}_0$, become more complicated. Thus we confine ourselves to
the first term.

This gives
\begin{equation}
  \label{eq:Jspecial-asymp}
  \begin{split}
    \mathcal{J}^{(\lambda;\lambda-\frac12)}(z)&=
    \frac{\Gamma(\lambda+\frac{1}{2})}{\sqrt\pi \, \Gamma(\lambda)}
    \left(\left(\log\frac1{1-z}\right)^2 - 
      2 \left( \gamma - 2 \log2 + \psi(\lambda) \right) \log\frac1{1-z} + C\right)\\
    &+
    \mathcal{O}\left((1-z)\left(\log(1-z)\right)^2\right),
  \end{split}
\end{equation}
where $C$ is an explicit constant that does not influence the later results.

The method of singularity analysis (see
\cite{Flajolet_Odlyzko1990:singularity_analysis_generating}), explained in
Section \ref{sec:singularity-analysis} and applied in the proof of Theorem~\ref{thm:jacobi}, allows to
translate \eqref{eq:Jspecial-asymp} into an asymptotic expressions for
$J_n^{(\lambda;\lambda-\frac12)}$ as given in the result. 
\end{proof}

\section{Special cases}\label{sec:special-cases}

In this section we consider connection formulas and relations between the parameters $\lambda$, $\alpha$, $\beta$, and $\mu$ not covered by the generic case.

\subsection{General connection formulas}

A direct corollary of \cite[Theorem~3]{SanchezRuiz2001:linearization_and_connection_formula} is the following result connecting $I_n^{(\lambda; \alpha, \beta)}$ and $I_k^{(\rho; \alpha, \beta)}$. 
\begin{prop} Let $\alpha, \beta > - 1$ and $\lambda, \rho > 0$. Then
\begin{equation*}
\begin{split}
I_n^{(\lambda; \alpha, \beta)} 
&= \frac{\Pochhsymb{2\lambda}{n}}{(n!)^2} \sum_{k=0}^n \binom{n}{k} \frac{(k!)^2 \Pochhsymb{k+2\lambda}{n} \Pochhsymb{\lambda}{k} \Pochhsymb{\rho + \frac{1}{2}}{k}}{\Pochhsymb{2\rho}{2k} \Pochhsymb{\rho}{k} \Pochhsymb{\lambda + \frac{1}{2}}{k}} \\
&\phantom{equals}\times \Hypergeom{5}{4}{k-n,k+n+2\lambda,k+\lambda,k+2\rho,k+\rho+\frac{1}{2}}{2k+2\rho+1,k+\rho,k+2\lambda,k+\lambda+\frac{1}{2}}{1} \, I_k^{(\rho; \alpha, \beta)}.
\end{split}
\end{equation*}

\end{prop}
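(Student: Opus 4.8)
The plan is to derive the proposition as an immediate consequence of the connection formula for \emph{squares} of Gegenbauer polynomials proved in \cite[Theorem~3]{SanchezRuiz2001:linearization_and_connection_formula}, followed by term-by-term integration against the Jacobi weight. Recall from the proof of Theorem~\ref{thm:explicit} that $\left( \GegenbauerC_n^{(\lambda)}( x ) \right)^2$ equals the normalising constant $\left( \Pochhsymb{2\lambda}{n}/n! \right)^2$ times a terminating $\HyperpFq32$ in the variable $1 - x^2$; in particular it is an even polynomial of degree $2n$, equivalently a polynomial of degree $n$ in $1 - x^2$. Hence the family $\bigl\{ \left( \GegenbauerC_k^{(\rho)}( x ) \right)^2 \bigr\}_{k=0}^{n}$ spans the space of polynomials of degree $\le n$ in $1 - x^2$, and there is a unique expansion
\[
\left( \GegenbauerC_n^{(\lambda)}( x ) \right)^2 = \sum_{k=0}^{n} c_{n,k}^{(\lambda,\rho)} \left( \GegenbauerC_k^{(\rho)}( x ) \right)^2,
\]
whose coefficients $c_{n,k}^{(\lambda,\rho)}$ are supplied explicitly by \cite[Theorem~3]{SanchezRuiz2001:linearization_and_connection_formula}.

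Granting that formula, the rest of the argument is essentially one line: multiply the displayed identity by $\left( 1 - x \right)^\alpha \left( 1 + x \right)^\beta$, integrate over $[-1,1]$, and interchange the finite sum with the integral. By the definition \eqref{eq:In-def} the left-hand side becomes $I_n^{(\lambda;\alpha,\beta)}$ and the $k$-th summand on the right becomes $c_{n,k}^{(\lambda,\rho)} \, I_k^{(\rho;\alpha,\beta)}$, which is precisely the asserted relation once $c_{n,k}^{(\lambda,\rho)}$ is put in the stated shape. No convergence issue arises since the sum is finite.

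The only genuine work is therefore to match the connection coefficient $c_{n,k}^{(\lambda,\rho)}$, as it appears in \cite{SanchezRuiz2001:linearization_and_connection_formula}, with the prefactor
\[
\frac{\Pochhsymb{2\lambda}{n}}{(n!)^2} \binom{n}{k} \frac{(k!)^2 \Pochhsymb{k+2\lambda}{n} \Pochhsymb{\lambda}{k} \Pochhsymb{\rho + \frac{1}{2}}{k}}{\Pochhsymb{2\rho}{2k} \Pochhsymb{\rho}{k} \Pochhsymb{\lambda + \frac{1}{2}}{k}}
\]
multiplying the terminating $\HyperpFq54$ of the statement. Since Sánchez-Ruiz's formula may be stated with a different normalisation of the polynomials, or with the hypergeometric argument and parameters packaged differently, this reconciliation proceeds through the standard Pochhammer manipulations of \eqref{eq:pochhammer.rules} — in particular the duplication rule $\Pochhsymb{2\rho}{2k} = 2^{2k}\Pochhsymb{\rho}{k}\Pochhsymb{\rho+\frac12}{k}$, which is exactly what simplifies the denominator $\Pochhsymb{2\rho}{2k}\Pochhsymb{\rho}{k}$. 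I expect this bookkeeping, verifying that the two expressions for the connection coefficient coincide term by term, to be the main (though purely routine) obstacle; beyond \cite[Theorem~3]{SanchezRuiz2001:linearization_and_connection_formula} and the definition \eqref{eq:In-def}, no further analytic input is required.
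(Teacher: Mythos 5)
Your proposal is correct and takes essentially the same route as the paper, which states the result as a direct corollary of S\'anchez-Ruiz's Theorem~3 (the connection formula for squares of Gegenbauer polynomials) followed by integration against the Jacobi weight; the coefficient bookkeeping you defer is likewise left implicit in the paper.
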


For the case $\beta - \alpha \in \mathbb{N}$, we have the following connection formula.
\begin{thm} \label{thm:connection.formula.alpha.beta}
Let $\alpha > -1$ and $\lambda > 0$. For $k \in \mathbb{N}$ with $k = 2m + \eta$ and $\eta \in \{0,1\}$, 
\begin{equation*}
I_n^{(\lambda; \alpha, \alpha + k)} = \sum_{\ell = 0}^m (-1)^\ell b_\ell \, J_n^{(\lambda;\ell+\alpha+\frac{1}{2})},
\end{equation*}
where  
\begin{equation*}
b_\ell = \sum_{\mu=\ell}^m \binom{2m+\eta}{2\mu} \binom{\mu}{\ell} = \binom{m}{\ell} \frac{\Pochhsymb{m+\eta}{m-\ell}}{\Pochhsymb{\frac{1}{2}+\eta}{m-\ell}} \times
\begin{cases}
1, & \eta = 0 \\
2 m + 1, & \eta = 1.
\end{cases}
\end{equation*}
\end{thm}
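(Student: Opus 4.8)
The plan is to deduce the whole statement from a single polynomial identity together with a parity argument. Since $\GegenbauerC_n^{(\lambda)}(-x)=(-1)^n\GegenbauerC_n^{(\lambda)}(x)$, the function $\bigl(\GegenbauerC_n^{(\lambda)}(x)\bigr)^2(1-x^2)^\alpha$ is even, so after writing $(1-x)^\alpha(1+x)^{\alpha+k}=(1-x^2)^\alpha(1+x)^k$ only the even part of the polynomial $(1+x)^k$ contributes to
\[
I_n^{(\lambda;\alpha,\alpha+k)}=\int_{-1}^1\bigl(\GegenbauerC_n^{(\lambda)}(x)\bigr)^2(1-x^2)^\alpha(1+x)^k\,\dd x .
\]
With $k=2m+\eta$, that even part is $\tfrac12\bigl((1+x)^k+(1-x)^k\bigr)=\sum_{\mu=0}^m\binom{k}{2\mu}x^{2\mu}$, a polynomial in $x^2$ of degree exactly $m$.

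Next I would expand each even power in powers of $1-x^2$ via $x^{2\mu}=\bigl(1-(1-x^2)\bigr)^\mu=\sum_{\ell=0}^\mu(-1)^\ell\binom{\mu}{\ell}(1-x^2)^\ell$ and interchange the two finite sums, which gives
\[
\sum_{\mu=0}^m\binom{k}{2\mu}x^{2\mu}=\sum_{\ell=0}^m(-1)^\ell\,b_\ell\,(1-x^2)^\ell,\qquad b_\ell=\sum_{\mu=\ell}^m\binom{2m+\eta}{2\mu}\binom{\mu}{\ell}.
\]
Substituting this back into the integral, interchanging sum and integral, and recognising $\int_{-1}^1\bigl(\GegenbauerC_n^{(\lambda)}(x)\bigr)^2(1-x^2)^{\ell+\alpha}\,\dd x=J_n^{(\lambda;\ell+\alpha+\frac12)}$ (which is legitimate since $\ell+\alpha+\tfrac12>-\tfrac12$ whenever $\alpha>-1$) yields $I_n^{(\lambda;\alpha,\alpha+k)}=\sum_{\ell=0}^m(-1)^\ell b_\ell\,J_n^{(\lambda;\ell+\alpha+\frac12)}$, which is the asserted formula.

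It then remains to bring $b_\ell$ into the stated closed form. Setting $\mu=\ell+j$ and computing the ratio of consecutive summands, one finds that $b_\ell$ is a terminating Gauss series,
\[
b_\ell=\binom{2m+\eta}{2\ell}\,\HyperpFq{2}{1}\!\left(\ell-m-\tfrac{\eta}{2},\ \ell-m+\tfrac{1-\eta}{2};\ \ell+\tfrac12;\ 1\right),
\]
in which one of the two numerator parameters equals the non-positive integer $\ell-m$. Evaluating by the Chu--Vandermonde identity $\HyperpFq{2}{1}(-N,b;c;1)=\Pochhsymb{c-b}{N}/\Pochhsymb{c}{N}$ with $N=m-\ell$ gives $b_\ell=\binom{2m+\eta}{2\ell}\,\Pochhsymb{m+\eta}{m-\ell}/\Pochhsymb{\ell+\frac12}{m-\ell}$, and a short manipulation using $\Pochhsymb{\frac12}{r}=(2r)!/(4^r r!)$ (and, in the case $\eta=1$, $\Pochhsymb{\frac32}{r}=(2r+1)!/(4^r r!)$) rewrites $\binom{2m+\eta}{2\ell}/\Pochhsymb{\ell+\frac12}{m-\ell}$ as $\binom{m}{\ell}/\Pochhsymb{\frac12+\eta}{m-\ell}$ times $1$ when $\eta=0$ and times $2m+1$ when $\eta=1$, which is exactly the claimed expression for $b_\ell$. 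The whole argument is elementary; the only point that requires a little care is carrying the two parities $\eta\in\{0,1\}$ through the final Pochhammer bookkeeping simultaneously, and that is the closest thing to an obstacle here.
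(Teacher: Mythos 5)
Your proof is correct and follows essentially the same route as the paper: rewrite the weight as $(1-x^2)^\alpha(1+x)^k$, use parity to keep only the even part of $(1+x)^k$, and expand $x^{2\mu}=\bigl(1-(1-x^2)\bigr)^\mu$ to land on the $J_n^{(\lambda;\ell+\alpha+\frac12)}$ with the coefficients $b_\ell=\sum_{\mu=\ell}^m\binom{2m+\eta}{2\mu}\binom{\mu}{\ell}$. The only difference is that you additionally verify the closed form of $b_\ell$ via Chu--Vandermonde (which the paper states without proof), and that computation checks out in both parities $\eta\in\{0,1\}$.
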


\begin{rmk}
By Theorem~\ref{thm:connection.formula.alpha.beta}, the asymptotic expansion of $I_n^{(\lambda; \alpha, \beta)}$ for $\beta - \alpha$ is a positive integer can be derived from the asymptotic expansions of $J_n^{(\lambda;\ell+\alpha+\frac{1}{2})}$, $\ell = 0, 1, \dots, \lfloor \frac{\beta-\alpha}{2} \rfloor$. For $\beta - \alpha$ is a negative integer, the roles of $\alpha$ and $\beta$ are interchanged.
\end{rmk}

\begin{proof}[Proof of Theorem~\ref{thm:connection.formula.alpha.beta}]
Using the definition of $I_n^{(\lambda; \alpha, \beta)}$, we write
\begin{align*}
I_n^{(\lambda; \alpha, \alpha + k)} 
&= \int_{-1}^1 \left( 1 - x \right)^k \left( 1 - x^2 \right)^\alpha \left( \GegenbauerC_n^{(\lambda)}( x ) \right)^2 \dd x \\
&= \sum_{\ell=0}^k \binom{k}{\ell} \int_{-1}^1 x^\ell \left( 1 - x^2 \right)^\alpha \left( \GegenbauerC_n^{(\lambda)}( x ) \right)^2 \dd x.
\end{align*}
The result follows by observing that for odd integers $\ell$ the integral vanishes and 
\begin{equation*}
\left( x^2 \right)^\mu = \left( 1 - \left( 1 - x^2 \right) \right)^\mu = \sum_{\ell=0}^\mu \binom{\mu}{\ell} (-1)^\ell \left( 1 - x^2 \right)^\ell.
\end{equation*}

\end{proof}

\subsection{The case $\lambda-\mu\in\mathbb{N}$}

\begin{thm}\label{thm:lambda-k}
  Let $\lambda>0$ and $k\in\mathbb{N}$ with $k\leq\lambda$. Then
  $\mathcal{J}^{(\lambda;\lambda-k)}(z)$ is a rational function with
  denominator $(1-z)^{2k-1}$. As a consequence
  \begin{equation*}
  J_n^{(\lambda;\lambda-k)}=\frac{(2\lambda)_n}{n!}q_k(n),
\end{equation*}
where $q_k$ is a polynomial of degree $2k-2$. The generating function and the
explicit formula for the coefficients are given in \eqref{eq:J-exact} and
\eqref{eq:Jn-exact}. The main term of the asymptotics is given in \eqref{eq:Jn-asymptotics}.
\end{thm}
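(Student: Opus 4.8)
The plan is to read off all the claims from the closed form \eqref{eq:Jz} of the generating function together with the Mellin-Barnes representation \eqref{eq:J-mellin-barnes}, both specialised to $\mu=\lambda-k$. Inserting $\mu=\lambda-k$ into the indicial equation $x^{2}(x+2\lambda-2\mu-1)=0$ of the third-order Fuchsian differential equation satisfied by $\mathcal{J}^{(\lambda;\mu)}$ turns it into $x^{2}(x+2k-1)=0$, so the local exponents at $z=1$ are $0,0$ and $1-2k$ --- all integers. Hence the only thing that could keep $\mathcal{J}^{(\lambda;\lambda-k)}$ from being meromorphic (rather than merely holomorphic on $\mathbb{C}\setminus[1,\infty)$) at $z=1$ is a term $\log\tfrac1{1-z}$ in its local expansion there, and the whole argument comes down to showing such a term is absent.

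First I would run the contour shift in \eqref{eq:J-mellin-barnes} as in Sections~\ref{sec:jacobi-weights}--\ref{sec:gegenbauer-weights}, now with $\mu=\lambda-k$. The decisive observation is that the double poles of $\Gamma(s+\lambda)^{2}$ at $s=-\lambda-\ell$, $\ell\in\mathbb{N}_{0}$, are cut down to \emph{simple} poles by the simple zeros of $1/\Gamma(s+\mu+1)=1/\Gamma(s+\lambda-k+1)$ at the same points, because $s+\lambda-k+1=1-k-\ell\in-\mathbb{N}_{0}$ whenever $k\ge1$; so these residues give only ordinary power series in $(1-z)$, with \emph{no} logarithm. (Equivalently, the factor $\Gamma(\tfrac12+\mu-\lambda)/\Gamma(1+\mu-\lambda)=\Gamma(\tfrac12-k)/\Gamma(1-k)$ that multiplies every logarithmic coefficient in the generic expansion vanishes because $\Gamma(1-k)$ has a pole.) The remaining simple poles, at $s=-\mu-\tfrac12-\ell=-(\lambda-k+\tfrac12)-\ell$, contribute the terms $B_{m}(1-z)^{1+2\mu-2\lambda+m}=B_{m}(1-z)^{1-2k+m}$, $m\ge0$, now with \emph{integer} exponents, the leading one being $B_{0}=\Gamma(\lambda+\tfrac12)\,\Gamma(k-\tfrac12)^{2}\,\Gamma(\lambda-k+\tfrac12)\,/\,\bigl(4^{1-k}\sqrt{\pi}\,\Gamma(\lambda)\,\Gamma(\lambda+k-\tfrac12)\bigr)$, which is finite and nonzero under the hypothesis $k\le\lambda$ (this is exactly what keeps $\lambda-k+\tfrac12$ off the poles of $\Gamma$). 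By the Frobenius argument already used for $\mathcal{I}$ in Section~\ref{sec:jacobi-weights}, the log-free solution $\mathcal{J}^{(\lambda;\lambda-k)}$ then has, near $z=1$, a genuine convergent expansion $(1-z)^{1-2k}\sum_{m\ge0}B_{m}(1-z)^{m}+\sum_{m\ge0}a_{m}(1-z)^{m}$ of radius at least $1$, whose pole at $z=1$ has order exactly $2k-1$ since $B_{0}\ne0$.

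The second step turns this local meromorphy into global rationality. The function $\mathcal{J}^{(\lambda;\lambda-k)}$ is holomorphic on $\mathbb{C}\setminus[1,\infty)$, solves the third-order Fuchsian equation whose only singular points are $0,1,\infty$, is holomorphic near $0$, and --- by Step~1 --- single-valued near $1$; since $\pi_{1}(\mathbb{P}^{1}\setminus\{0,1,\infty\})$ is generated by loops around $0$ and $1$, the solution $\mathcal{J}^{(\lambda;\lambda-k)}$ is fixed by the whole monodromy group, hence single-valued, hence (being a Fuchsian solution) also meromorphic at $\infty$. A function meromorphic on all of $\mathbb{P}^{1}$ with a single pole, at $z=1$ of order $2k-1$, is rational of the form $g_{k}(z)/(1-z)^{2k-1}$ with $g_{k}$ a polynomial; and \eqref{eq:Jz} shows $\mathcal{J}^{(\lambda;\lambda-k)}(z)\to 0$ as $z\to-\infty$ (there $-4z/(1-z)^{2}\to 0^{+}$ while $(1-z)^{-2\lambda}\to 0$), so $\deg g_{k}\le 2k-2$. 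Extracting coefficients by partial fractions gives $\tfrac{n!}{(2\lambda)_{n}}J_{n}^{(\lambda;\lambda-k)}=[z^{n}]\,g_{k}(z)/(1-z)^{2k-1}=q_{k}(n)$ for all $n\ge0$, a polynomial in $n$ of degree $2k-2$ with leading coefficient $B_{0}/(2k-2)!\ne0$, consistent with the $\mu<\lambda-\tfrac12$ leading term obtained earlier.

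The remaining explicit assertions are then mechanical. Since $g_{k}(z)=(1-z)^{2k-1}\mathcal{J}^{(\lambda;\lambda-k)}(z)$ is a polynomial of degree at most $2k-2$, it is pinned down by the $2k-1$ values $J_{0}^{(\lambda;\lambda-k)},\dots,J_{2k-2}^{(\lambda;\lambda-k)}$ supplied by Theorem~\ref{thm:explicit} (or Theorem~\ref{thm:gegen}); this yields \eqref{eq:J-exact}, the partial-fraction expansion of $g_{k}(z)/(1-z)^{2k-1}$ yields the closed form \eqref{eq:Jn-exact} for $q_{k}$, and its top-degree coefficient yields the main asymptotic term \eqref{eq:Jn-asymptotics}. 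I expect the main obstacle to be the bookkeeping of the non-generic residue computation in Step~1 --- in particular making the ``no-logarithm'' cancellation rigorous rather than merely formal, and keeping track of how the two exponent series overlap once $1-2k+m\ge0$ --- with the monodromy step of Step~2 a secondary concern; once those are settled the rest is routine.
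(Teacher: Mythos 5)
Your proposal is correct in substance but establishes the key point --- rationality of $\mathcal{J}^{(\lambda;\lambda-k)}$ with denominator $(1-z)^{2k-1}$ --- by a genuinely different route. The paper works with the third-order ODE satisfied by $(1-z)^{2k-1}\mathcal{J}^{(\lambda;\lambda-k)}(z)$, translates it into the three-term recurrence \eqref{pn-recurr} for the coefficients of the expansion in powers of $(1-z)$, and observes that the recurrence terminates (one may take $p_{2k-1}=0$, whence $p_\ell=0$ for all $\ell\ge 2k-1$), so the equation has a polynomial solution that must coincide with the solution holomorphic at $z=0$. You instead stay with the Mellin--Barnes integral \eqref{eq:J-mellin-barnes}: you note that at $\mu=\lambda-k$ the zeros of $1/\Gamma(s+\mu+1)$ demote the double poles at $s=-\lambda-\ell$ to simple ones, so no logarithm survives, while the remaining poles produce only the integer exponents $1-2k+m$; single-valuedness at $0$ and $1$ then forces, via the monodromy of the Fuchsian equation, global rationality, and the behaviour as $z\to-\infty$ bounds the numerator degree. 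Both arguments are sound and both ultimately need the Mellin--Barnes residues for the explicit constants; your $B_0$ agrees with the leading coefficient in \eqref{eq:Jn-asymptotics}, and $B_0\neq0$ under $k\le\lambda$ correctly pins the pole order and $\deg q_k=2k-2$. The paper's route is more self-contained and constructive (the recurrence termination is an elementary verification), while yours explains structurally \emph{why} the logarithm disappears and reuses the generic-case machinery. Two small points: making the ``no-log'' step rigorous requires matching the full Mellin--Barnes asymptotic series against the Frobenius basis at $z=1$ (exactly the argument the paper sketches for $\mathcal{I}$ after \eqref{eq:I-asymp-1}), since the exponents $0,0,1-2k$ are resonant; and your final suggestion to recover \eqref{eq:J-exact} by interpolating from $J_0,\dots,J_{2k-2}$ is a detour --- the closed form comes directly from the residue sum \eqref{eq:sum-res} at $s=-\lambda+k-\ell-\tfrac12$, $\ell=0,\dots,k-1$, which you have already computed in your first step, using that the leftover integral is $\mathcal{O}(1)$ at $z=1$.
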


\begin{proof}
We first show that for $k\geq1$ we obtain a rational generating function
\begin{equation*}
  \mathcal{J}^{(\lambda;\lambda-k)}(z)=
  \frac{p_k(z)}{(1-z)^{2k-1}},
\end{equation*}
where $p_k$ is a polynomial of degree $2k-2$.

This can be seen by considering the differential equation satisfied by
$\mathcal{J}^{(\lambda;\lambda-k)}(z)(1-z)^{2k-1}$
\begin{equation} \label{eq:Kmod}
  \begin{split}
    z^2&(z-1)^2y'''+z(z-1) ((3\lambda-5k+7) z - 3 \lambda +k-2) y''\\
    &+ 2 \biggl( (1+\lambda -k) (\lambda -4 k+5) z^2 - \Big( (1+\lambda -k) (2\lambda -4 k+5) - 2 (k-1)^2 \Big) z \\
      &\phantom{+ 2 \biggl(}+ \lambda  ( 1 + \lambda  - k) \biggr)y'-2(k-1)(2\lambda-2k+1)(( 1 + \lambda-k)z-\lambda)y=0.
  \end{split}
\end{equation}

It translates into the three-term recurrence relation

%

\begin{equation}\label{pn-recurr}
  \begin{split}
    &(2 - 2 k + n) (1 + 2 \lambda - 2 k + n) (1 + \lambda - k + n)p_n \\
    &-\Bigl( \left( 2 - 2k + n \right) \big( \left( 2 - 2k + n \right) \left( 4 + 3 \lambda -k + 2n \right) + \lambda + k + 4 \lambda k \big) + 2 \lambda k \Bigr)p_{n+1} \\
    &+ (3 - 2 k + n)^2 (2 + n)p_{n+2}=0
  \end{split}
  \end{equation}
  for the coefficients of
  \begin{equation*}
    p(z)=\sum_{n=0}^\infty p_n(1-z)^n.
  \end{equation*}

  Specialising $n=2k-2$ and $n=2k-3$ gives
\begin{align*}
   p_{2k} &=  \lambda \, p_{2k-1}, \\
   (k-1) \, p_{2k-2} &=  (\lambda + k-2) \, p_{2k-3}
  \end{align*}
which shows that $p_{2k-1}$ can be chosen as $0$, without influencing the
coefficients $p_0,\ldots,p_{2k-2}$. Then $p_\ell=0$ for $\ell\geq2k-1$. Thus
\eqref{eq:Kmod} has a polynomial solution, which has to coincide with the only
solution holomorphic around $z=0$.

In order to compute the coefficients in the partial fraction decomposition
\begin{equation*}
  \mathcal{J}^{(\lambda;\lambda-k)}(z)=\sum_{\ell=1}^{2k-1}\frac{c_\ell}{(1-z)^\ell}
\end{equation*}
we use \eqref{eq:J-mellin-barnes} and shift the line of integration to the left
to line $\Re(s)=-\lambda$; this time we choose the contour to encircle
$s=-\lambda$ to the right. Collecting residues gives
\begin{equation}\label{eq:J-residues}
\begin{split}
  \mathcal{J}^{\lambda;\lambda-k}(z)&=
   \frac{\sqrt\pi\Gamma(2\lambda)}{\Gamma(\lambda)^2(1-z)^{2\lambda}}
  \frac1{2\pi i}\int\limits_{-\lambda-i\infty}^{-\lambda+i\infty}
  \frac{\Gamma(s+\lambda)^2\Gamma(s+\lambda-k+\frac12)\Gamma(-s)}
  {\Gamma(s+2\lambda)\Gamma(s+\lambda-k+1)}\left(\frac{4z}{(1-z)^2}\right)^s
  \dd s\\
  &+
  \frac{\sqrt\pi\Gamma(2\lambda)}{\Gamma(\lambda)^2(1-z)^{2\lambda}}
  \sum_{\ell=0}^{k-1}\frac{(-1)^\ell}{\ell!}
  \frac{\Gamma(k-\ell-\frac12)^2\Gamma(\lambda-k+\ell+\frac12)}
  {\Gamma(\lambda+k-\ell-\frac12)\Gamma(-\ell+\frac12)}
  \left(\frac{4z}{(1-z)^2}\right)^{-\lambda+k-\ell-\frac12}.
\end{split}
\end{equation}

The sum of residues simplifies to
\begin{equation}\label{eq:sum-res}
  \frac{\Gamma(2\lambda)}{\Gamma(\lambda)^2}
  \sum_{\ell=0}^{k-1}\frac{(\frac12)_\ell}{\ell!}
  \frac{\Gamma(k-\ell-\frac12)^2\Gamma(\lambda-k+\ell+\frac12)}
  {\Gamma(\lambda+k-\ell-\frac12)}
  (4z)^{-\lambda+k-\ell-\frac12}(1-z)^{2\ell-2k+1}.
\end{equation}
We already know that $\mathcal{J}^{(\lambda;\lambda-k)}(z)$ is a rational
function with denominator $(1-z)^{2k-1}$. Furthermore, the integral in
\eqref{eq:J-residues} behaves like $\mathcal{O}(1)$ for $z\to1$. Thus, the
rational function and the coefficients in its partial fraction decomposition
can be obtained from the corresponding asymptotic terms of the sum
\eqref{eq:sum-res} for $z\to1$. For this purpose we rewrite the powers of $4z$
in terms of the binomial series to obtain
\begin{equation}\label{eq:J-exact}
  \mathcal{J}^{(\lambda;\lambda-k)}(z)
  =\frac{4^{k-1} \sqrt{\pi} \, \Gamma(\lambda+\frac12)}
  {\Gamma(\lambda)(1-z)^{2k-1}}
  \sum_{r=0}^{2k-2}(1-z)^r
  \sum_{\ell=0}^{\lfloor\frac r2\rfloor}\frac{(\frac12)_{k-\ell-1}^2\,(\frac12)_\ell}
  {4^\ell\,(r-2\ell)!\,\ell!\,(\lambda-k-\ell+r+\frac12)_{2k-r-1}}.
\end{equation}
From this we can read off an exact formula for the coefficients
\begin{equation}\label{eq:Jn-exact}
  \begin{split}
    J_n^{(\lambda;\lambda-k)}&= \frac{(2\lambda)_n}{n!} \frac{4^{k-1}
      \sqrt{\pi}\,\Gamma(\lambda+\frac12)}{\Gamma(\lambda)}
    \sum_{r=0}^{2k-2}\binom{n+2k-2-r}{n}\\
    &\times \sum_{\ell=0}^{\lfloor\frac
      r2\rfloor}\frac{(\frac12)_{k-\ell-1}^2\,(\frac12)_\ell}
    {4^\ell\,(r-2\ell)!\,\ell!\,(\lambda-k-\ell+r+\frac12)_{2k-r-1}}.
  \end{split}
\end{equation}
The asymptotic main term is given by
\begin{equation} \label{eq:Jn-asymptotics}
  J_n^{(\lambda;\lambda-k)} = \frac{\sqrt\pi \, \gammafcn( k - \frac{1}{2} ) \gammafcn( \frac{1}{2} + \lambda - k )}
  {2^{2\lambda-1}\Gamma(\lambda)^2 (k-1)! \gammafcn( k + \lambda - \frac{1}{2} )} \, n^{2\lambda+2k-3}+
  \mathcal{O}(n^{2\lambda+2k-4}).
\end{equation}
\end{proof}

\subsection{The case $\mu-\lambda\in\mathbb{N}$}
In this case we observe that \eqref{eq:Jn-connect} has at most $k\DEF\mu-\lambda$
terms and there occur some extra cancellations in the Pochhammer-symbols. We get in particular  
\begin{equation*}
J_n^{(\lambda;\lambda)} = \frac{\Pochhsymb{2\lambda}{n}}{n!} \, \frac{\sqrt{\pi} \, \gammafcn( \lambda + \frac{1}{2} )}{\gammafcn( \lambda )} \, \frac{1}{n + \lambda}
\end{equation*}
and for $k \in \mathbb{N}$:
\begin{align*}
  J_n^{(\lambda;\lambda+k)} &= \frac{2\pi}{4^{\lambda+k}\Gamma(\lambda)^2}
  \sum_{\ell=0}^{\min(k,\lfloor \frac{n}{2} \rfloor)} {\binom k \ell}^2 \left(
    \frac{\gammafcn(n-\ell+\lambda)}{\gammafcn(n+k-\ell+1+\lambda)}\right)^2\\
  &\times\left( n - 2\ell + k + \lambda \right) \frac{\gammafcn( n + 2k - 2\ell + 2\lambda )}{\gammafcn( n - 2 \ell + 1 )}.
\end{align*}
All terms in the sum have the same asymptotic order as $n\to\infty$. We use
\begin{equation*}
  \sum_{\ell=0}^k{\binom k \ell}^2=\binom{2k}k = \frac{2^{2k} \gammafcn( k + \frac{1}{2} )}{\sqrt{\pi} \, \gammafcn( k + 1 )}
\end{equation*}
to obtain the following asymptotic formula.
\begin{thm}\label{thm:mu=lambda+m}
  Let $\lambda>0$ and $k\in\mathbb{N}$. Then
  \begin{equation*}
  J_n^{(\lambda;\lambda+k)}=\frac{2\pi}{4^{\lambda+k}\Gamma(\lambda)^2} \binom{2k}k \,  n^{2\lambda-2} + \mathcal{O}(n^{2\lambda-3}) \qquad \text{as $n \to \infty$.}
\end{equation*}
\end{thm}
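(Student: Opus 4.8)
The plan is to start from the explicit finite-sum representation for $J_n^{(\lambda;\lambda+k)}$ displayed just before the statement, namely
\begin{equation*}
J_n^{(\lambda;\lambda+k)} = \frac{2\pi}{4^{\lambda+k}\Gamma(\lambda)^2}
  \sum_{\ell=0}^{\min(k,\lfloor n/2\rfloor)} \binom{k}{\ell}^2
  \left(\frac{\gammafcn(n-\ell+\lambda)}{\gammafcn(n+k-\ell+1+\lambda)}\right)^2
  (n-2\ell+k+\lambda)\,\frac{\gammafcn(n+2k-2\ell+2\lambda)}{\gammafcn(n-2\ell+1)},
\end{equation*}
and to show each summand is asymptotically $\binom{k}{\ell}^2 n^{2\lambda-2}(1+o(1))$, with an error of relative order $1/n$. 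For $n$ large the upper limit of summation is simply $k$, so the sum has a fixed finite number of terms and we may analyse each one separately and then add.

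The key step is the asymptotics of the ratios of Gamma functions. Using $\gammafcn(n+a)/\gammafcn(n+b)= n^{a-b}(1+\mathcal{O}(1/n))$ as $n\to\infty$ (the standard ratio asymptotic, which also underlies \eqref{eq:Cnlambda.of.one} and \eqref{eq:sing.analysis.binomial.asymptotics}), the squared ratio contributes $n^{2(\lambda-\ell)-2(k-\ell+1+\lambda)} = n^{-2k-2}$; the factor $\gammafcn(n+2k-2\ell+2\lambda)/\gammafcn(n-2\ell+1)$ contributes $n^{2k+2\lambda-1}$; and the linear factor $n-2\ell+k+\lambda$ contributes $n$. Multiplying, the power of $n$ is $-2k-2+2k+2\lambda-1+1 = 2\lambda-2$, independently of $\ell$, which is exactly why ``all terms in the sum have the same asymptotic order'' as remarked. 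The leading constant of the $\ell$-th term is therefore $\binom{k}{\ell}^2$, and one must check that the $\mathcal{O}(1/n)$ relative errors from the three Gamma-ratio estimates combine to an overall $\mathcal{O}(1/n)$ relative error, so that summing over the (finitely many) $\ell$ gives
\begin{equation*}
J_n^{(\lambda;\lambda+k)} = \frac{2\pi}{4^{\lambda+k}\Gamma(\lambda)^2}
  \left(\sum_{\ell=0}^{k}\binom{k}{\ell}^2\right) n^{2\lambda-2} + \mathcal{O}(n^{2\lambda-3}).
\end{equation*}
Finally one invokes the Vandermonde-type identity $\sum_{\ell=0}^k\binom{k}{\ell}^2=\binom{2k}{k}$ recalled just above the theorem to rewrite the constant, yielding the stated formula.

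I do not expect any genuine obstacle here: the argument is a direct, finite-term Gamma-ratio estimate, and the only points requiring minor care are (i) justifying that for $n\ge 2k$ the summation range is the full set $\{0,\dots,k\}$ so the sum is a fixed finite sum, and (ii) bookkeeping the error terms — each of the three ratio estimates carries a multiplicative factor $1+\mathcal{O}(1/n)$ with constants depending on $\ell,k,\lambda$ but not on $n$, and since there are only $k+1$ terms the total error is uniformly $\mathcal{O}(n^{2\lambda-3})$. One could alternatively phrase the whole computation through the generating function \eqref{eq:Jz} and singularity analysis as in the generic case, but the elementary route above is shorter given that the closed finite sum is already in hand.
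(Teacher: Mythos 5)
Your proposal is correct and follows essentially the same route as the paper: the paper likewise starts from the explicit finite sum displayed before the theorem, observes that every term has the same asymptotic order $n^{2\lambda-2}$ via standard Gamma-ratio asymptotics, and sums the leading constants $\binom{k}{\ell}^2$ using $\sum_{\ell=0}^k\binom{k}{\ell}^2=\binom{2k}{k}$. Your exponent bookkeeping ($-2k-2+2k+2\lambda-1+1=2\lambda-2$) and error analysis match the paper's (more tersely stated) argument.
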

Of course, a full asymptotic expansion could be given with more effort.

\subsection{The case $\lambda \in \mathbb{N}$ and $\mu, 2\mu \notin \mathbb{Z}$}

Let $\lambda = k$, $k \in \mathbb{N}$. The case $\mu \in \mathbb{Z}$ is covered above. The case of $\mu$ is a half-integer is more involved and not done here. 
The setting is a non-generic case for the Gegenbauer weight. To obtain the local expansion around $z = 1$, we proceed as before by using \eqref{eq:J-mellin-barnes} and shifting the line of integration to the left. Collecting the residues at the double poles in $-k-\ell$, $0 \leq \ell \leq k - 1$, we obtain
\begin{equation*}
\begin{split}
&\frac{\sqrt{\pi} \, \gammafcn( \mu + \frac{1}{2} )}{\gammafcn( \mu + 1 )} \, \frac{\Pochhsymb{\frac{1}{2}}{k} \Pochhsymb{-\mu}{k}}{\gammafcn( k ) \Pochhsymb{\frac{1}{2}-\mu}{k}} \, z^{-k} \Hypergeom{3}{2}{1-k,k,k-\mu}{1,k + \frac{1}{2} - \mu}{- \frac{(1-z)^2}{4z}} \, \log \frac{1}{1-z} \\
&\phantom{equals}+ \text{power series in $(1-z)$.}
\end{split}
\end{equation*}
Due to cancellation, the integrand in \eqref{eq:J-mellin-barnes} has simple poles in $-k-\ell$ for $\ell \geq k$. Collecting those residues, we obtain
\begin{equation*}
\frac{\sqrt{\pi} \, \gammafcn( \mu + \frac{1}{2} )}{4 \gammafcn( \mu + 1 )} \, \frac{\Pochhsymb{\frac{1}{2}}{k}^2 \Pochhsymb{-\mu}{2k}}{k! k! \Pochhsymb{\frac{1}{2}-\mu}{k}} \frac{(1-z)^{2k}}{z^{2k}} \Hypergeom{4}{3}{1,1,2k,2k-\mu}{k+1,k+1,2k+\frac{1}{2}-\mu}{-\frac{(1-z)^2}{4z}}
\end{equation*}
which is holomorphic about $z = 1$ and as a power series in $(1-z)$ will, thus, not play a role in the singularity analysis. Collecting residues at the simple poles in $-\mu-\frac{1}{2}-\ell$, $\ell \in \mathbb{N}_0$, we have
\begin{equation*}
\begin{split}
&2^{2k-2\mu-2} \gammafcn( - \frac{1}{2} - \mu ) \gammafcn( \mu + \frac{1}{2} ) \frac{\Pochhsymb{\frac{1}{2}}{k} \Pochhsymb{- \frac{1}{2} - \mu}{k}^2}{\gammafcn( k ) \Pochhsymb{- \frac{1}{2} - \mu}{2k}} \\
&\phantom{equals}\times \left( 1 - z \right)^{1+2\mu-2k} z^{-\frac{1}{2}-\mu} \Hypergeom{3}{2}{\frac{1}{2}, \frac{1}{2} + \mu, \frac{3}{2} + \mu - 2k}{\frac{3}{2} + \mu - k, \frac{3}{2} + \mu - k}{-\frac{(1-z)^2}{4z}}.
\end{split}
\end{equation*}
Thus, we arrive at
\begin{equation*}
\mathcal{J}^{(k;\mu)}(z) = \sum_{m=0}^\infty A_m \left( 1 - z \right)^m \log \frac{1}{1-z} + \sum_{m=0}^\infty B_m \left( 1 - z \right)^{1+2\mu-2k+m} + \text{power series in $(1-z)$},
\end{equation*}
where 
\begin{align*}
A_m &= \frac{\sqrt{\pi} \, \gammafcn( \mu + \frac{1}{2} )}{\gammafcn( \mu + 1 )} \, \frac{\Pochhsymb{\frac{1}{2}}{k} \Pochhsymb{-\mu}{k}}{\gammafcn( k ) \Pochhsymb{\frac{1}{2}-\mu}{k}} \sum_{\ell=0}^{\lfloor \frac{m}{2} \rfloor} \frac{\Pochhsymb{1-k}{\ell} \Pochhsymb{k-\mu}{\ell} \Pochhsymb{k}{m-\ell}}{\Pochhsymb{k+\frac{1}{2}-\mu}{\ell} (\ell!)^2 (m-2\ell)!} \, \frac{(-1)^\ell}{4^\ell}, \\
B_m &= 2^{2k-2\mu-2} \gammafcn( - \frac{1}{2} - \mu ) \gammafcn( \mu +
\frac{1}{2} ) \frac{\Pochhsymb{\frac{1}{2}}{k} \Pochhsymb{- \frac{1}{2} -
    \mu}{k}^2}{\gammafcn( k ) \Pochhsymb{- \frac{1}{2} - \mu}{2k}}\\
&\times\sum_{\ell=0}^{\lfloor \frac{m}{2} \rfloor} \frac{\Pochhsymb{\frac{1}{2}}{\ell} \Pochhsymb{\frac{3}{2} + \mu - 2 k}{\ell} \Pochhsymb{\mu + \frac{1}{2}}{m-\ell}}{\Pochhsymb{\frac{3}{2}+\mu-k}{\ell} \Pochhsymb{\frac{3}{2}+\mu-k}{\ell} \ell! (m - 2\ell)!} \, \frac{(-1)^\ell}{4^\ell}.
\end{align*}
Observe, that the sum in the expression for $A_m$ has at most $k-1$ terms. Using singularity analysis, this translates into the following asymptotic series:
\begin{equation*}
J_n^{(k;\mu)} = \frac{(2k)_n}{n!} \left( \sum_{m=0}^\infty (-1)^m \frac{m!}{n(n-1)\cdots(n-m)} A_m + \sum_{m=0}^\infty \binom{n+2k-2\mu-2-m}{n} B_m  \right).
\end{equation*}

\subsection{Other non-generic cases} 

In principle, our method can be used to get the full asymptotic expansion. Due to pole cancellation and pole multiplication in the integrands of \eqref{eq:I-mellin-barnes} and \eqref{eq:J-mellin-barnes} depending on assumptions on interrelations between the parameters $\lambda$, $\alpha$, $\beta$, and $\mu$, and the position of the line of integration as it is moved to the left, computations are rather involved. For example, in the case of $\lambda \in \mathbb{N}$ and $\mu$ a positive half-integer such that $0 < m = \mu + \frac{1}{2} < \lambda = k$, the integrand in \eqref{eq:J-mellin-barnes} has in $-m-\ell$: simple poles for $0 \leq \ell < k - m$, triple poles for $k - m \leq \ell < 2k-m$, and double poles for $\ell \geq 2k-m$.

\begin{ackno}
  The authors are grateful to Peter Paule for providing them with his 
  \texttt{Mathematica} package, which implements Zeilberger's algorithm and
  allows for transforming differential equations into holonomic linear
  recurrences for the coefficients. They are also indepted to Helmut Prodinger
  for pointing out to them that this method could be applied.
\end{ackno}
\bibliographystyle{abbrv}
\bibliography{REFS}

\end{document}